\documentclass[12pt,centertags,oneside]{article}
\usepackage{amsmath,amstext,amsthm,amscd,typearea}
\usepackage{amssymb}
\usepackage{a4wide}
\usepackage[mathscr]{eucal}
\usepackage{mathrsfs}
\usepackage{typearea}
\usepackage{charter}
\usepackage{pdfsync}
\usepackage{url}

\usepackage{xcolor}

\usepackage[a4paper,width=16.2cm,top=3cm,bottom=3cm]{geometry}

\numberwithin{equation}{section}




\newtheorem{theorem}{Theorem}[section]
\newtheorem{definition}[theorem]{Definition}
\newtheorem{proposition}[theorem]{Proposition}
\newtheorem{corollary}[theorem]{Corollary}
\newtheorem{lemma}[theorem]{Lemma}
\newtheorem{remark}[theorem]{Remark}

\newcommand{\cali}[1]{\mathscr{#1}}

\newcommand{\vol}{\mathop{\mathrm{vol}}}

\newcommand{\ddc}{dd^c}
\newcommand{\dc}{d^c}

\newcommand{\Sing}{\text{\normalfont Sing}}

\newcommand{\C}{\mathbb{C}}

\newcommand{\N}{\mathbb{N}}

\renewcommand\P{\mathbb{P}}



\title{\bf  Loss of mass of non-pluripolar products}
\providecommand{\keywords}[1]{\textbf{\textit{Keywords:}} #1}
\providecommand{\subject}[1]{\textbf{\textit{Mathematics Subject Classification 2010:}} #1}

\author{Duc-Viet Vu}

\newcommand{\Addresses}{{
		\bigskip
		\footnotesize
		\textsc{Duc-Viet Vu, University of Cologne, Division of Mathematics, Department of Mathematics and Computer Science, Weyertal 86-90, 50931, K\"oln,  Germany}
		\noindent
		\par\nopagebreak
		\noindent
		\textit{E-mail address}: \texttt{vuduc@math.uni-koeln.de}	
}}

\date{\today}
\begin{document}
\maketitle
\begin{abstract} It is a well-known fact that the non-pluripolar self-products of a closed positive $(1,1)$-current in a big nef cohomology class on a compact K\"ahler manifold are not of full mass in the presence of positive Lelong numbers of the current in consideration. In this paper, we give a quantitative version of the last property.  Our proof involves a generalization of Demailly's comparison of Lelong numbers to the setting of theory of density currents, a reversed Alexandrov-Fenchel inequality for non-pluripolar products and the notion of relative non-pluripolar products.    
\end{abstract}
\noindent
\keywords {relative non-pluripolar product}, {density current}, {tangent current}, {Lelong number}, {full mass intersection}, {Alexandrov-Fenchel inequality}.
\\

\noindent
\subject{32U15}, {32Q15}.

\section{Introduction}

Let $X$ be a compact K\"ahler manifold of dimension $n$. Let $\omega$ be a K\"ahler form on $X$. For every closed positive $(p,p)$-current $S$ on $X$, we recall that the mass $\|S\|$ of $S$ is given by $\int_X S \wedge \omega^{n-p}$.  A cohomology $(p,p)$-class $\alpha$ is said to be \emph{pseudoeffective} if it is the class of a closed positive $(p,p)$-current. For  pseudoeffective $(p,p)$-classes $\alpha_1$ and $\alpha_2$ on $X$, we write $\alpha_1 \le  \alpha_2$ if $\alpha_2- \alpha_1$ is pseudoeffective. For a pseudoeffective $(p,p)$-class $\alpha$, we  use the notation $\|\alpha\|$ to denote the norm of a closed positive current $S$ representing $\alpha$. This quantity is independent of the choice of $S$. 

Let $1\le m \le n$ be an integer. Let $\alpha_1, \ldots, \alpha_m$ be  big nef cohomology classes on $X$. Let $T_j$ be a closed positive current of bi-degree $(1,1)$ in $\alpha_j$ for $1 \le j \le m$. The non-pluripolar product $\langle T_1 \wedge \cdots \wedge T_m \rangle$ plays an important role in complex geometry; see \cite{BT_fine_87,BEGZ,Lu-Darvas-DiNezza-logconcave,GZ-weighted,Viet-generalized-nonpluri} and references therein. It generalizes the classical product of $(1,1)$-currents of bounded potentials (see \cite{Bedford_Taylor_82}). A key phenomenon about that notion is that the non-pluripolar products don't preserve the mass, \emph{i.e,} in general, we have 
\begin{align}\label{ine-sosanhmassT1denTmomega1m}
\big \| \langle T_1 \wedge \cdots \wedge T_m \rangle \big \| \le \big \| \alpha_1 \wedge \cdots \wedge \alpha_m \big \|.
\end{align}
We indeed have a much stronger property that the cohomology class of the current  $\langle T_1 \wedge \cdots \wedge T_m \rangle$ is less than or equal to $\alpha_1 \wedge \cdots \wedge \alpha_m$, see \cite[Theorem 1.1]{Viet-generalized-nonpluri} and also  \cite{BEGZ,Lu-Darvas-DiNezza-mono,WittNystrom-mono} for the case where $m=n$.

When the equality in (\ref{ine-sosanhmassT1denTmomega1m}) occurs, the currents $T_1,\ldots,T_m$ are said to be \emph{of full mass intersection}. The last notion is at the heart of the theory of non-pluripolar products. Characterizing such currents is hence important. So far we have known  that the positivity of Lelong numbers of $T_j$'s is an obstruction to being of full mass intersection; see  \cite{Lu-Darvas-DiNezza-singularitytype,GZ-weighted,Vu_lelong-bigclass} and references therein. We are interested in understanding this property from a quantitative point of view. Although we think that this question is worth studying, there has not been much research in this direction.  To our best knowledge, the first available result is probably \cite[Theorem 1.2]{Vu_lelong-bigclass} which treated the case when $m=n$; see also \cite[Corollary 7.6]{Demailly_regula_11current} for a related result.  Unfortunately, the method in \cite{Vu_lelong-bigclass} is not good enough to obtain appropriate quantitative estimates when $m<n$, see comments after Theorem \ref{the-self-intersec} below. The following is our main result giving such an quantitative estimate for every $m$ in the case of self-intersection.   

\begin{theorem} \label{the-self-intersec} Let $\cali{N}_0$ be a compact subset in the intersection of the big and nef cones of $X$.  Let $\alpha$ be a cohomology $(1,1)$-class in $\cali{N}_0$. Let $T$ be a closed positive current in $\alpha$. Let $1\le m \le n$ be an integer.  Let $\cali{V}$ be  the set of  maximal irreducible analytic subsets $V$ of dimension at least $n-m$ of $X$ such that the generic Lelong number $\nu(T,V)$ of $T$ along $V$ is strictly positive. Then, there exists a constant $C>0$ independent of $\alpha$ such that  
\begin{align}\label{ine-obstructionselfinter}
\big\| \alpha^m - \{\langle T^m \rangle \}  \big\| \ge C \bigg(\sum_{V \in \cali{V}}\nu(T,V)^{n- \dim V} \vol(V) \bigg)^{2^m}.
\end{align}
\end{theorem}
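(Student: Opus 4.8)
The plan is to turn the global cohomological estimate into a local analysis near the members of $\cali{V}$, performed on a single modification resolving all of them, and to run an induction in which the reversed Alexandrov--Fenchel inequality for non-pluripolar products accounts for the exponent $2^m$. First the preliminary reductions. Since $\cali{N}_0$ is compact there is a constant $C_0$ with $\|\alpha^j\|\le C_0$ for all $\alpha\in\cali{N}_0$ and all $j$; a Demailly-type self-intersection bound applied to the tangent (density) currents of $T$ along the members of $\cali{V}$ gives in addition $\sum_{V\in\cali{V}}\nu(T,V)^{n-\dim V}\vol(V)\le C_0$, so in particular the series converges and, by a routine limiting argument, it suffices to prove the inequality for any finite subfamily of $\cali{V}$ (with a constant independent of the subfamily). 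By \cite[Theorem 1.1]{Viet-generalized-nonpluri} the class $\alpha^m-\{\langle T^m\rangle\}$ is pseudoeffective, so the whole content of the theorem is a lower bound for its mass.

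Next, the local set-up. Fix a finite subfamily of $\cali{V}$ and a modification $\pi\colon\wi X\to X$, with centres over the singular loci, such that the strict transforms of all these $V$'s are smooth; throughout one keeps track of how masses and generic Lelong numbers transform under $\pi$, the exceptional corrections being uniformly controlled by $C_0$. For each such $V$, of codimension $p_V\le m$ and with $\nu:=\nu(T,V)>0$, I would form the tangent current of $\pi^*T$ along (the transform of) $V$ on the projective compactification of the normal bundle; by Dinh--Sibony theory its fibrewise degree is at least $\nu$. The generalization of Demailly's comparison of Lelong numbers to density currents enters here: it allows one to compare the tangent currents, and their non-pluripolar self-intersections, of $T$ with those of an auxiliary current with analytic singularities along $V$ of the same generic Lelong number, and thereby to bound from below the ``mass concentrated on $V$'' of the non-pluripolar power $\langle T^j\rangle$ for $p_V\le j\le m$. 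That notion of ``mass concentrated on $V$'' is made precise by the relative non-pluripolar products of \cite{Viet-generalized-nonpluri} restricted to the analytic subsets in play; organizing the bookkeeping by codimension, one obtains nonnegative quantities $Q_j$ ($1\le j\le m$), each dominated by $\|\alpha^j-\{\langle T^j\rangle\}\|$ up to $\pi$-errors, and satisfying a base estimate: the ``fresh'' contribution to $Q_j$ coming from the $V$'s of codimension exactly $j$ is at least $c\sum_{\codim V=j}\nu(T,V)^j\vol(V)$.

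The inductive step is where the reversed Alexandrov--Fenchel inequality for non-pluripolar products does the work. Applied with the repeated factor $T$ and the remaining factor $\langle T^{j-2}\rangle$ (interpreted relatively along the subsets already encountered), together with the density-current comparison of the previous paragraph, it yields $Q_j\ge c\,Q_{j-1}^2$ up to a bounded multiplicative constant depending only on $C_0$. Unwinding the recursion, the fresh contribution entering at codimension level $j$ gets squared $m-j$ times, so $Q_m\ge\sum_{j}c_j\big(\sum_{\codim V=j}\nu(T,V)^j\vol(V)\big)^{2^{m-j}}$; since there are at most $m$ codimension levels and since each of these sums, as well as their total $D:=\sum_{V}\nu(T,V)^{n-\dim V}\vol(V)$, is $\le C_0$, one converts every power $2^{m-j}$ into $2^m$ at the cost of a constant depending on $C_0$ and $m$, and then uses $\big(\sum_j x_j\big)^{2^m}\le m^{2^m}\max_j x_j^{2^m}$ to conclude $\|\alpha^m-\{\langle T^m\rangle\}\|\ge Q_m - (\pi\text{-errors})\ge C\,D^{2^m}$ with $C=C(n,m,C_0)$, hence $C$ depending only on $\cali{N}_0$. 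This is (\ref{ine-obstructionselfinter}).

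The main obstacle I expect is twofold. The first is establishing the reversed Alexandrov--Fenchel inequality itself in the non-pluripolar setting: the classical inequality needs nef classes and smooth (or bounded-potential) representatives, while here the relevant ``classes'' are only pseudoeffective and the products non-pluripolar, so one must recover enough of the mixed-Hodge-index positivity at the level of density currents — this is what the generalized Demailly comparison is meant to supply, but combining the two cleanly, and with constants that do not degenerate as $\alpha$ moves in $\cali{N}_0$, is delicate. The second is the bookkeeping under $\pi$ and the additivity over $\cali{V}$: one has to be sure the exceptional contributions are genuinely lower order, and that the localizations near distinct members of $\cali{V}$ (which may intersect) do not double-count the same mass, which is exactly why the argument is organized around relative non-pluripolar products along a descending chain of analytic subsets ordered by codimension rather than around a single $V$.
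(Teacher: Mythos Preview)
Your proposal has the right ingredients but contains a genuine gap and also misreads where the exponent $2^m$ comes from.

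The gap is your use of a modification $\pi:\wi X\to X$ resolving the $V$'s, with the claim that ``the exceptional corrections [are] uniformly controlled by $C_0$''. The constant $C_0$ comes from compactness of $\cali{N}_0$ and controls only $\alpha$; it says nothing about the geometry of the exceptional divisors, which depends on the $V$'s and hence on $T$. The paper explicitly warns (paragraph after the statement of Theorem~\ref{the-self-intersec}) that arguments using desingularization of $V$ or blowup along $V$ yield lower bounds depending intrinsically on $V$, which is exactly what must be avoided. Your assertion of uniform control is the entire difficulty, and the sketch gives no reason to believe it.

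The paper's argument is structured quite differently from your single induction. First, the K\"ahler case (Theorem~\ref{the-kahlerself-intersec}) is proved with a \emph{linear} lower bound --- no exponent $2^m$ --- and with no modification: after Demailly approximation one may take $T$ with analytic singularities; maximality of $V\in\cali{V}$ of dimension $l$ then forces the classical product $T^{n-l-1}$ to be well-defined near generic points of $V$ and to agree there with $\langle T^{n-l-1}\rangle$. Lemma~\ref{le-truonghom=1lelong} (built on the density-current Lelong bound, Corollary~\ref{cor-sosanhlelong}) applied to $P=T$ and $S=\langle T^{n-l-1}\rangle$ gives $\|\alpha\wedge\{S\}-\{\langle T\dot{\wedge}S\rangle\}\|\ge\sum_{\dim V=l}\nu(T,V)^{n-l}\vol(V)$, and K\"ahler-ness of $\alpha$ is used to convert $\alpha\wedge\{S\}\le\alpha^{n-l}$. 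Second, the reversed Alexandrov--Fenchel inequality (Proposition~\ref{pro-theoremmain}, iterated in Corollary~\ref{cor-theoremmain} and packaged as Theorem~\ref{the-congthemomega}) is used \emph{only} to reduce big nef to K\"ahler: adding a smooth $\theta$ with $\beta:=\alpha+\{\theta\}$ K\"ahler, one obtains $\|\beta^m-\{\langle(T+\theta)^m\rangle\}\|\le C\|\alpha^m-\{\langle T^m\rangle\}\|^{2^{-m}}$ and then applies the K\"ahler case to $\beta$ and $T+\theta$. The exponent $2^m$ arises entirely from this reduction step, not from a recursion $Q_j\ge cQ_{j-1}^2$ over codimensions.
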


Here we put $\vol(V):= \int_V \omega^{\dim V}$.  The \emph{maximality} of $V$ in the above result means that in the set of  irreducible analytic subsets $V'$ of $X$ such that $\nu(T,V')>0$, we consider the partial order given by the inclusion of sets, and $V$ is maximal if it is so with respect to that order.  Theorem \ref{the-self-intersec} no longer holds if one enlarges $\cali{V}$ to include non-maximal analytic sets. One can see it as follows: take for example $T$ to be a current with analytic singularities along analytic set $V_0$ of dimension $>n-m$, for a suitable choice of $V_0$ (e.g, $V_0$ is biholomorphic to a complex projective space), we see that the volume of an irreducible set $V$ of dimension $n-m$ in $V_0$ can be as big as we want. 

  
To get motivated about Theorem \ref{the-self-intersec}, one can consider an ideal classical example where $T$ has analytic singularities along  an irreducible analytic set $V$ of dimension $n-m$, in this case, the right-hand side of (\ref{ine-obstructionselfinterkahler}) can be replaced by $\nu(T,V)^m \vol(V)$. When  $\cali{N}_0$ is compact in the K\"ahler cone of $X$, we have a more precise estimate; see Theorem \ref{the-kahlerself-intersec} for details. 

We underline that  the arguments used in \cite[Theorem 1.2]{Vu_lelong-bigclass} and  \cite[Theorem 1.2]{Viet-generalized-nonpluri} are not sufficient to get (\ref{ine-obstructionselfinter})  because the proofs there use desingularizations of $V$ and the blowup along $V$; hence the lower bounds obtained there depend intrinsically on $V$.   We don't know what should be an optimal lower bound for the left-hand side of (\ref{ine-obstructionselfinter}).  

As a direct consequence of Theorem \ref{the-self-intersec},  we infer that if $\{\langle T^m \rangle \}= \alpha^m$, then for every (not necessarily maximal) irreducible analytic subset $V$ of dimension at least $n-m$, we have $\nu(T,V)=0$. This recovers a previous known result in \cite{Vu_lelong-bigclass} (see Theorem 1.3 there). As a byproduct of our method, we also generalize some results from \cite{Lu-Darvas-DiNezza-singularitytype}, see Corollaries \ref{cor-Tcongvoitheta} and \ref{cor-Tcongvoitheta2} below. \emph{Finally we note that Theorem \ref{the-self-intersec} is new even when  $m=n$ and $\alpha$ is K\"ahler.} 

In order to prove Theorem \ref{the-self-intersec}, we will prove a reserved Alexandrov-Fenchel inequality for non-pluripolar products, see Proposition \ref{pro-theoremmain} below. This will reduce the question to the K\"ahler case. Next, we establish a generalization of Demailly's lower bound for Lelong numbers of intersection of currents to the setting of the theory of density currents introduced by Dinh-Sibony in \cite{Dinh_Sibony_density}, see Corollary \ref{cor-sosanhlelong}. This is a key in our treatment of the K\"ahler case. Moreover, we emphasize that the notion of relative non-pluripolar products in \cite{Viet-generalized-nonpluri} will play a crucial role in our proof. It serves as a bridge from density currents to non-pluripolar products. 


In the next section, we prove the above-mentioned lower bound for Lelong numbers of density currents. In Section \ref{sec-AF}, we establish a reversed Alexandrov-Fenchel type inequality for non-pluripolar products. Theorems \ref{the-self-intersec} is proved in Sections \ref{sec-the1}. 
\\

\noindent
\textbf{Acknowledgments.} The author would like to thank Tien-Cuong Dinh and Nessim Sibony  for fruitful discussions. This research  is supported by a postdoctoral fellowship of the Alexander von Humboldt Foundation.

\section{Lelong numbers of intersection of currents} \label{sec-lelong-density}

We first recall some basic properties of density currents. The last notion was introduced in \cite{Dinh_Sibony_density}. 

Let $X$ be a complex K\"ahler manifold of dimension $n$ and $V$ a smooth complex submanifold of $X$ of dimension $l.$  Let $T$ be a closed positive $(p,p)$-current on $X,$ where $0 \le p \le n.$  Denote by $\pi: E\to V$ the normal bundle of $V$ in $X$ and $\overline E:= \P(E \oplus \C)$ the projective compactification of $E.$ By abuse of notation, we also use $\pi$ to denote the natural projection from $\overline E$ to $V$. 

Let $U$ be an open subset of $X$ with $U \cap V \not = \varnothing.$  Let $\tau$ be  a smooth diffeomorphism  from $U$ to an open neighborhood of $V\cap U$ in $E$ which is identity on $V\cap U$ such that  the restriction of its differential $d\tau$ to $E|_{V \cap U}$ is identity.  Such a map is called \emph{an admissible map}.  Note that in \cite{Dinh_Sibony_density}, to define an admissible map,  it is required furthermore that $d\tau$ is $\C$-linear at every point of $V$. This difference doesn't affect what follows.  When $U$ is a small enough tubular neighborhood of $V,$ there always exists an admissible map $\tau$ by \cite[Lemma 4.2]{Dinh_Sibony_density}. In general, $\tau$ is not holomorphic.  When $U$ is a small enough local chart, we can choose a holomorphic admissible map by using suitable holomorphic coordinates on $U$.   For $\lambda \in \C^*,$ let $A_\lambda: E \to E$ be the multiplication by $\lambda$ on fibers of $E.$  We recall the following crucial result.

\begin{theorem} \label{th-dieukienHVconictangenetcurrent} (\cite[Theorem 4.6]{Dinh_Sibony_density})
Let $\tau$ be an admissible map defined on a tubular neighborhood of $V$. Then,  the family $(A_\lambda)_* \tau_* T$ is of mass uniformly bounded in $\lambda$, and  if $S$ is a limit current of the last family as $\lambda \to \infty$, then  $S$ is a current on $E$ which can be extended trivially through $\overline E \backslash E$ to be a closed positive current on $\overline E$  such that the cohomology class $\{S\}$ of $S$ in $\overline E$ is independent of the choice of $S$, and $\{S\}|_V= \{T\}|_V$,  and $\|S\| \le C \|T\|$ for some constant $C$ independent of $S$ and $T$. 
\end{theorem}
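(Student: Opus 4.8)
The plan is to establish the uniform mass bound first — this is the technical heart — then to extract a weak limit $S$ on $E$, to prove that its trivial extension is closed by a Skoda--El Mir argument, and finally to pin down the cohomology class of $S$ by pairing it with classes coming from $V$. Throughout, a convenient reduction is to fix a partition of unity on $V$ and a cut-off $\chi$ equal to $1$ on a smaller tube $V\subset U'\Subset U$ and supported in $U$, and to replace $T$ by $\chi T$: this changes the mass only by a bounded factor and does not affect the limit currents under consideration, since $(A_\lambda)_*\tau_*(\chi T)$ and $(A_\lambda)_*\tau_* T$ agree on $A_\lambda(\tau(U'))$, which eventually contains any prescribed compact subset of $E$ as $\lambda\to\infty$. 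After the reduction $\tau_* T$ has fixed compact support in $E$; it is no longer closed, but $d(\tau_* T)=\tau_*(d\chi\wedge T)$ and $dd^c(\tau_* T)=\tau_*(dd^c\chi\wedge T+d\chi\wedge d^cT)$ are supported in the fixed compact set $K_0:=\tau(\supp d\chi)$, which lies at positive distance from both $V$ and $\P(E)=\overline E\setminus E$.

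For the mass bound the point is to choose the Kähler form on $\overline E$ well. Fix a Hermitian metric on $E\oplus\underline{\C}$ over $V$, let $\omega_{\FS}$ be the induced fibrewise Fubini--Study form on $\overline E=\P(E\oplus\underline{\C})$, and put $\omega_{\overline E}:=C_0\,\pi^*\omega_V+\omega_{\FS}$, which is Kähler for $C_0\gg1$. Because $\pi\circ A_\lambda=\pi$ we have $A_\lambda^*\pi^*\omega_V=\pi^*\omega_V$, while $A_\lambda^*\omega_{\FS}$ is again a smooth closed positive form in the \emph{fixed} class $c_1(\Oc_{\overline E}(1))$, and $A_\lambda^*\omega_{\FS}-\omega_{\FS}=dd^cu_\lambda$ for a smooth function $u_\lambda$ on $\overline E$ with $0\le u_\lambda\le2\log|\lambda|$ such that $u_\lambda-2\log|\lambda|$ is uniformly bounded, with all its derivatives, on a neighbourhood of $K_0$. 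Writing $\mu_\lambda:=(A_\lambda)_*\tau_* T$ and using $\|\mu_\lambda\|=\langle\tau_* T,(A_\lambda^*\omega_{\overline E})^{n-p}\rangle$, I would expand in powers of $A_\lambda^*\omega_{\FS}$ and, in the $q$-th term, replace $(A_\lambda^*\omega_{\FS})^q$ by $\omega_{\FS}^q$. Their difference telescopes as $dd^c(u_\lambda\,\Sigma_\lambda)$, where $\Sigma_\lambda:=\sum_{i<q}(A_\lambda^*\omega_{\FS})^i\wedge\omega_{\FS}^{q-1-i}$ is a smooth \emph{closed} form, so integration by parts (legitimate: $u_\lambda\Sigma_\lambda$ is smooth and $(\pi^*\omega_V)^{n-p-q}$ is closed) turns the $q$-th contribution into
\[
\langle\tau_* T,\ (\pi^*\omega_V)^{n-p-q}\wedge\omega_{\FS}^q\rangle\ \pm\ \langle dd^c(\tau_* T),\ (\pi^*\omega_V)^{n-p-q}\wedge u_\lambda\Sigma_\lambda\rangle .
\]
The first term is $\le C\langle T,\omega_X^{n-p}\rangle=C\|T\|$ since $\tau$ is a fixed diffeomorphism. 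In the second, split $u_\lambda=2\log|\lambda|+(u_\lambda-2\log|\lambda|)$: the constant $2\log|\lambda|$ is killed by $dd^c$, so it contributes $\pm2\log|\lambda|\,\langle dd^c(\tau_* T),(\pi^*\omega_V)^{n-p-q}\wedge\Sigma_\lambda\rangle=0$ because $(\pi^*\omega_V)^{n-p-q}\wedge\Sigma_\lambda$ is smooth and closed, while the remaining part pairs $dd^c(\tau_* T)$ — supported in $K_0$ — against a form uniformly bounded in $C^1$ near $K_0$, hence is $\le C\|T\|$. Summing over $q$ yields $\|\mu_\lambda\|\le C\|T\|$ uniformly in $\lambda$. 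This is the step I expect to be the main obstacle: without such a choice $A_\lambda^*\omega_{\overline E}$ blows up like $|\lambda|^2$ along $V$, and it is exactly the interplay of the projective-bundle structure (one summand invariant, the other of fixed class) with the fact that the logarithmic divergence of $u_\lambda$ is constant, hence annihilated after integrating by parts against the closed current $T$, that makes the estimate work.

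Given the mass bound, a diagonal argument over an exhaustion of $E$ produces, along any subsequence $\lambda_j\to\infty$, a positive weak limit $S$ on $E$ with $\|S\|_E\le C\|T\|$ by lower semicontinuity of mass. Since $d\mu_\lambda=(A_\lambda)_*\tau_*(d\chi\wedge T)$ is supported in $A_\lambda(K_0)$, which leaves every compact subset of $E$ as $\lambda\to\infty$, we get $dS=0$ on $E$. So $S$ is closed positive on $E=\overline E\setminus\P(E)$ with finite mass near the analytic hypersurface $\P(E)$, and by Skoda--El Mir its trivial extension $\widetilde S$ is closed positive on $\overline E$, gaining no mass; hence $\|\widetilde S\|\le C\|T\|$.

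Finally, to identify $\{\widetilde S\}$: the normal bundle of $V$ inside $E\subset\overline E$ is $E$ itself, so a Thom form $\eta$ of $E$ is a smooth closed $(n-l,n-l)$-form on $\overline E$ supported in a fixed tube about $V$ — in particular \emph{compactly supported in $E$} — representing the class of the zero section. For a closed $(l-p,l-p)$-form $\gamma$ on $V$ the test form $\eta\wedge\pi^*\gamma$ is smooth and compactly supported in $E$, so $\langle\mu_\lambda,\eta\wedge\pi^*\gamma\rangle\to\langle\widetilde S,\eta\wedge\pi^*\gamma\rangle=\langle i_0^*\{\widetilde S\},\{\gamma\}\rangle_V$, with $i_0$ the zero section; on the other hand $\langle\mu_\lambda,\eta\wedge\pi^*\gamma\rangle=\langle\chi T,(A_\lambda\circ\tau)^*\eta\wedge(\pi\circ\tau)^*\gamma\rangle=\langle T,\psi_\lambda\rangle$ with $\psi_\lambda$ a closed form on $X$ supported in a tube about $V$, and since $(A_\lambda\circ\tau)^*\eta$ is, for every $\lambda$, a Thom form of the normal bundle of $V$ in $U$ (naturality of the Thom class), evaluating $\langle T,\psi_\lambda\rangle$ through a smooth closed representative of $\{T\}$ on $X$ and using $(\pi\circ\tau)\circ i_V=\id_V$ gives $\langle T,\psi_\lambda\rangle=\int_V i_V^*\theta\wedge\gamma=\langle i_V^*\{T\},\{\gamma\}\rangle_V$, independent of $\lambda$. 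Hence $i_0^*\{\widetilde S\}=i_V^*\{T\}$ by non-degeneracy of the Poincaré pairing on $V$, i.e. $\{\widetilde S\}|_V=\{T\}|_V$. The other components of $\{\widetilde S\}$ in the Leray--Hirsch decomposition of $H^{p,p}(\overline E)$ as a module over $H^\bullet(V)$ are treated in the same spirit — pairing against powers of the relative hyperplane class $c_1(\Oc_{\overline E}(1))$ and invoking the $u_\lambda$-splitting of the mass-bound step to see that the resulting numbers are independent of the chosen limit — which yields the independence of $\{\widetilde S\}$.
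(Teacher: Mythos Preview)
The paper does not prove this theorem: it is quoted verbatim from Dinh--Sibony \cite[Theorem~4.6]{Dinh_Sibony_density} and used as a black box, so there is no ``paper's own proof'' to compare against. What you have written is a sketch of (essentially) the Dinh--Sibony argument.

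Your outline is broadly on the right track --- the choice of $\omega_{\overline E}=C_0\pi^*\omega_V+\omega_{\FS}$, the observation that $A_\lambda^*\omega_{\FS}$ stays in a fixed class, and the integration-by-parts trick exploiting that the logarithmic blow-up of $u_\lambda$ is constant are exactly the ingredients Dinh--Sibony use for the mass bound. Two points deserve more care, however. First, since $\tau$ is only a smooth diffeomorphism (not holomorphic), $\tau_*T$ is \emph{not} a positive $(p,p)$-current on $E$, so the quantity $\langle\tau_*T,(A_\lambda^*\omega_{\overline E})^{n-p}\rangle$ is not automatically a mass and does not by itself control $\|\mu_\lambda\|$; one needs the admissibility condition ($d\tau|_E=\id$) to show that $\tau_*T$ is positive up to an error that becomes negligible after dilation, and this is where the actual work in \cite{Dinh_Sibony_density} lies. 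Second, your treatment of the full cohomology class --- the ``other components'' in the Leray--Hirsch decomposition --- is too vague: saying they are ``treated in the same spirit'' hides a genuine computation (pairing against $c_1(\Oc_{\overline E}(1))^j\wedge\pi^*\gamma$ and tracking the $\lambda$-dependence), and the independence of the limit is not immediate from what you wrote. These are not fatal, but they are the places where your sketch would need real work to become a proof.
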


We call $S$  \emph{a tangent current to $T$ along $V$}, and its cohomology class is called \emph{the total tangent class of $T$ along $V$} and is denoted by $\kappa^V(T)$. By \cite[Theorem 4.6]{Dinh_Sibony_density} again,  if 
$$S=\lim_{k\to \infty} (A_{\lambda_k})_* \tau_* T$$ for some sequence $(\lambda_k)_k$ converging to $\infty$, then  for every open subset $U$ of $X$ and  every admissible map $\tau': U' \to E$ , we also have  
$$S=\lim_{k\to \infty} (A_{\lambda_k})_* \tau'_* T.$$
This is equivalent to saying that tangent currents are independent of the choice of the admissible map $\tau$.

\begin{definition} (\cite{Dinh_Sibony_density}) Let $F$ be a complex manifold and $\pi_F: F \to V$ a holomorphic submersion. Let $S$  be  a positive current of bi-degree $(p,p)$  on $F$. \emph{The h-dimension} of $S$ with respect to $\pi_F$ is the biggest integer $q$ such that $S \wedge \pi_F^* \theta^q \not =0$ for some Hermitian metric $\theta$ on $V$.  
\end{definition}

By a bi-degree reason, the h-dimension of $S$ is in $[\max\{l- p,0\}, \min\{\dim F -p,l\}]$. 
We have the following description of currents with minimal h-dimension. 

\begin{lemma} \label{le-minimalhdimension} (\cite[Lemma 3.4]{Dinh_Sibony_density})  Let $\pi_F: F \to V$ be a submersion. Let $S$ be a closed positive  current of bi-degree $(p,p)$ on $F$ of h-dimension $(l -p)$ with respect to $\pi_F$. Then $S= \pi^* S'$ for some closed positive current $S'$ on $V$. 
\end{lemma}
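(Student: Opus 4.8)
The plan is to reduce to a local computation, extract the structure of $S$ from its positivity together with the minimality of the h-dimension, and then exploit $dS=0$.

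First I would note that, since $\pi_F$ is a submersion, $\pi_F^*$ is injective on currents (in a local product chart a pulled-back current has coefficients not depending on the fibre variables), so the property of being a $\pi_F$-pullback is local on $V$ and glues: if $S=\pi_F^*S_i'$ over the preimages of the members $V_i$ of an open cover of $V$, then $\pi_F^*(S_i'-S_j')=0$ on overlaps, the $S_i'$ patch to a global $S'$ on $V$ with $\pi_F^*S'=S$, and such an $S'$ is automatically closed ($\pi_F^*dS'=dS=0$, then use injectivity) and positive (in a product chart $S$ and $S'$ have the same coefficients, so positivity of $S$ forces that of $S'$). Hence I may assume $F=\Omega\times D$ with $\Omega\subseteq\C^l$, $D\subseteq\C^k$ a polydisc, $k=\dim F-l$, and $\pi_F=\pr_1$; write $z=(z_1,\dots,z_l)$ for the base and $w=(w_1,\dots,w_k)$ for the fibre coordinates and expand $S=\sum S_{I\bar JK\bar L}\,i^{\mu}\,dz_I\wedge d\bar z_J\wedge dw_K\wedge d\bar w_L$ over increasing multi-indices $I,J\subseteq\{1,\dots,l\}$, $K,L\subseteq\{1,\dots,k\}$ with $|I|+|K|=|J|+|L|=p$, the coefficients being distributions and $i^\mu$ the appropriate normalizing factors.

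The main step is to show that $S$ has \emph{only base components}, i.e.\ $S_{I\bar JK\bar L}=0$ whenever $K\neq\varnothing$ or $L\neq\varnothing$. Pick a Hermitian metric $\theta$ on $V$ restricting to $i\sum_{j=1}^{l}dz_j\wedge d\bar z_j$ in the chart; minimality of the h-dimension gives $S\wedge\pi_F^*\theta^{\,l-p+1}=0$. Expanding $\big(i\sum_j dz_j\wedge d\bar z_j\big)^{\,l-p+1}$ as a positive combination of the strongly positive forms $\bigwedge_{j\in A}i\,dz_j\wedge d\bar z_j$ over $A\subseteq\{1,\dots,l\}$, $|A|=l-p+1$, and using that a sum of positive currents vanishes only if each summand does, I obtain $S\wedge\bigwedge_{j\in A}i\,dz_j\wedge d\bar z_j=0$ for every such $A$. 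Comparing coefficients of the linearly independent monomials $dz_{I\cup A}\wedge d\bar z_{J\cup A}\wedge dw_K\wedge d\bar w_L$ that arise then forces $S_{I\bar JK\bar L}=0$ whenever some admissible $A$ is disjoint from $I\cup J$, i.e.\ whenever $|I\cup J|\le p-1$; in particular every diagonal coefficient $S_{I\bar IK\bar K}$ with $K\neq\varnothing$ vanishes. Next the Cauchy--Schwarz inequality for positive currents, $|S_{I\bar JK\bar L}|^{2}\le C\,S_{I\bar IK\bar K}\cdot S_{J\bar JL\bar L}$ as measures ($C$ a dimensional constant), propagates this to all coefficients with $K\neq\varnothing$ or $L\neq\varnothing$, so $S=\sum_{|I|=|J|=p}S_{I\bar J}\,i^{\mu}\,dz_I\wedge d\bar z_J$ with $I,J\subseteq\{1,\dots,l\}$. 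Finally, since $S$ now carries no $dw_\alpha$ or $d\bar w_\beta$, the $dw_\alpha$-part of $\partial S$ equals $\sum(\partial_{w_\alpha}S_{I\bar J})\,dw_\alpha\wedge dz_I\wedge d\bar z_J$ and the $d\bar w_\beta$-part of $\bar\partial S$ equals $\sum(\partial_{\bar w_\beta}S_{I\bar J})\,d\bar w_\beta\wedge dz_I\wedge d\bar z_J$; these are linearly independent from the remaining terms and from each other, so $dS=0$ gives $\partial_{w_\alpha}S_{I\bar J}=\partial_{\bar w_\beta}S_{I\bar J}=0$ for all $\alpha,\beta,I,J$. Hence each $S_{I\bar J}$ is constant along the connected fibres $\{z\}\times D$, so $S=\pi_F^*S'$ with $S'=\sum S'_{I\bar J}\,i^{\mu}\,dz_I\wedge d\bar z_J$ for the corresponding distributions $S'_{I\bar J}$ on $\Omega$; gluing as above completes the proof.

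I expect the delicate point to be the middle step: the h-dimension identity by itself only annihilates the coefficients with $|I\cup J|\le p-1$, and one genuinely needs the positivity of $S$ — through the Cauchy--Schwarz inequality for positive $(p,p)$-currents — in order to kill \emph{all} the remaining fibre components; once $S$ is known to involve no differentials from the fibre directions, the closedness argument is routine.
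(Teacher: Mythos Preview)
The paper does not supply its own proof of this lemma; it simply quotes \cite[Lemma~3.4]{Dinh_Sibony_density}. Your overall architecture --- localize to a product chart, use the minimal h-dimension together with positivity to kill every coefficient carrying a fibre differential, then use $dS=0$ to force fibre-constancy of the remaining coefficients, and glue --- is the standard one and is correct in outline.

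There is, however, one genuine gap. The Cauchy--Schwarz inequality you invoke, $|S_{I\bar JK\bar L}|^{2}\le C\,S_{I\bar IK\bar K}\cdot S_{J\bar JL\bar L}$, is \emph{not} valid for arbitrary weakly positive $(p,p)$-currents when $1<p<\dim F-1$: the coefficient matrix $(S_{M\bar N})_{|M|=|N|=p}$ is positive semi-definite only for \emph{strongly} positive currents, and on $\C^{4}$ one can build a weakly positive $(2,2)$-form with $S_{N\bar N}=0$ for a single $N$ yet $S_{M\bar N}\neq 0$, by making the remaining diagonal entries large and using the Pl\"ucker relation $c_{12}c_{34}-c_{13}c_{24}+c_{14}c_{23}=0$ to absorb the cross term. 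What actually makes your conclusion go through is that your h-dimension step has already killed \emph{all} the diagonals $S_{(I,K)\overline{(I,K)}}$ with $K\neq\varnothing$, not just one. From this stronger input, weak positivity tested against the multi-parameter decomposable forms $\alpha(s)=\bigwedge_{j}\big(dz_{m'_j}+s_j\,dz_{n'_j}\big)$ (rather than a two-term combination $e_{M^c}+te_{N^c}$, which is decomposable only when $M,N$ differ in a single index) does force the remaining fibre-mixed coefficients to vanish. So the step needs to be rewritten, but the fix is local to that paragraph; the closedness and gluing arguments are fine as written.
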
 

By \cite{Dinh_Sibony_density}, the h-dimensions of tangent currents to $T$ along $V$ are the same and this number is called \emph{the tangential h-dimension of $T$ along $V$}.

Let $m\ge 2$ be an integer. Let $T_j$ be a closed positive current  of bi-degree $(p_j, p_j)$ for $1 \le j \le m$ on $X$ and let  $T_1 \otimes \cdots \otimes T_m$ be the tensor current of $T_1, \ldots, T_m$ which is a current on $X^m.$  A \emph{density current} associated to $T_1, \ldots,  T_m$ is a tangent current to $\otimes_{j=1}^m T_j$ along the diagonal $\Delta_m$ of $X^m.$ Let $\pi_m: E_m \to \Delta$ be the normal bundle of $\Delta_m$ in $X^m$. Denote by $[V]$ the current of integration along $V$.  When $m=2$ and $T_2 =[V]$, the density currents of $T_1$ and  $T_2$ are naturally identified with the  tangent currents to $T_1$ along $V$ (see \cite[Lemma 2.3]{Vu_density-nonkahler}).

The unique cohomology class of density currents associated to $T_1,\ldots,T_m$ is called \emph{the total density class of $T_1, \ldots, T_m$}. We denote the last class by $\kappa(T_1,\ldots, T_m)$. The tangential h-dimension of  $T_1 \otimes \cdots \otimes T_m$ along $\Delta_m$ is called \emph{the density h-dimension} of  $T_1, \ldots T_m$. 

\begin{lemma} \label{le-classDSproduct} (\cite[Section 5]{Dinh_Sibony_density})  Let $T_j$ be a closed positive current of bi-degree $(p_j,p_j)$ on $X$ for $1 \le j \le m$ such that $\sum_{j=1}^m p_j \le n$.  Assume that the density h-dimension of $T_1, \ldots, T_m$ is minimal, \emph{i.e}, equal to $n- \sum_{j=1}^m p_j$. Then the total density class of  $T_1, \ldots, T_m$ is equal to $\pi_{m}^*(\wedge_{j=1}^m\{T_j\})$. 
\end{lemma}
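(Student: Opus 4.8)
The plan is to reduce the statement to the structure theorem for currents of minimal h-dimension, Lemma \ref{le-minimalhdimension}, applied to a density current $S$ of $T_1,\ldots,T_m$ along $\Delta_m$. Fix such an $S$, which by definition is a tangent current to $T_1\otimes\cdots\otimes T_m$ along $\Delta_m$; by Theorem \ref{th-dieukienHVconictangenetcurrent} it is a closed positive current on $\overline{E_m}$ supported on $E_m$, of bi-degree $(p,p)$ with $p=\sum_{j=1}^m p_j$. The hypothesis says its h-dimension with respect to $\pi_m\colon E_m\to\Delta_m$ is the minimal value $n-p=l-p$, where $l=\dim\Delta_m=n$. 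Hence Lemma \ref{le-minimalhdimension} gives $S=\pi_m^*S'$ for some closed positive current $S'$ on $\Delta_m$. Passing to cohomology, $\{S\}=\pi_m^*\{S'\}$, so the total density class $\kappa(T_1,\ldots,T_m)$ lies in the image of $\pi_m^*$ and is determined by the class $\{S'\}$ on $\Delta_m\cong X$.

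Next I would identify $\{S'\}$. The restriction property in Theorem \ref{th-dieukienHVconictangenetcurrent} states that $\{S\}|_{\Delta_m}=\{T_1\otimes\cdots\otimes T_m\}|_{\Delta_m}$. Under the canonical diffeomorphism $\Delta_m\cong X$, the class $\{T_1\otimes\cdots\otimes T_m\}$ restricts to $\Delta_m$ as the pullback by the diagonal embedding $\iota\colon X\hookrightarrow X^m$ of $\{T_1\}\times\cdots\times\{T_m\}$, which is exactly $\wedge_{j=1}^m\{T_j\}$ by the Künneth formula (the external product pulls back to the cup product along the diagonal). On the other hand, since $\pi_m$ restricted to the zero section is the identity on $\Delta_m$, we have $(\pi_m^*\{S'\})|_{\Delta_m}=\{S'\}$. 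Combining, $\{S'\}=\wedge_{j=1}^m\{T_j\}$ as a class on $X$, hence $\kappa(T_1,\ldots,T_m)=\{S\}=\pi_m^*\big(\wedge_{j=1}^m\{T_j\}\big)$, which is the assertion.

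I expect the main obstacle to be purely bookkeeping rather than conceptual: one must verify carefully that the h-dimension bound in the hypothesis is precisely the minimal one so that Lemma \ref{le-minimalhdimension} applies (this uses $\dim\Delta_m=n$ and $\sum p_j\le n$, giving $\max\{l-p,0\}=n-p$), and that the identification of the restricted external product with the non-pluripolar-free cup product $\wedge_{j=1}^m\{T_j\}$ is the standard Künneth/diagonal computation in the cohomology of $X^m$. Since all the analytic content is already packaged in Theorem \ref{th-dieukienHVconictangenetcurrent} and Lemma \ref{le-minimalhdimension}, and since the cohomology class of a tangent/density current is independent of the choice of representative, no further estimates are needed; the proof is short. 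I would also remark that this is exactly the content of \cite[Section 5]{Dinh_Sibony_density} and include the argument for completeness.
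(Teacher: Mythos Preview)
Your argument is correct. The paper does not supply its own proof of this lemma; it simply cites \cite[Section 5]{Dinh_Sibony_density}, and the short argument you give---apply Lemma~\ref{le-minimalhdimension} to write $S=\pi_m^*S'$, then identify $\{S'\}$ via the restriction identity $\{S\}|_{\Delta_m}=\{T_1\otimes\cdots\otimes T_m\}|_{\Delta_m}$ from Theorem~\ref{th-dieukienHVconictangenetcurrent} together with the K\"unneth/diagonal computation---is exactly the reasoning behind that cited result.
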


We recall the following result. 

\begin{theorem} \label{th-sosanhVV'densityDS} (\cite[Proposition 4.13]{Dinh_Sibony_density}) Let  $V'$ be a submanifold of  $V$ and  let $T$ be a closed positive current on $X$. Let $T_\infty$ be a tangent current to $T$ along $V$. Denote by $s$ the density h-dimension of $T_\infty$ and $[V']$. Then, the density h-dimension of $T$ and $[V']$  is at most $s$, and we have  
$$\kappa_s(T,[V']) \le \kappa_s(T_\infty, [V']).$$
The inequality still holds if we replace $s$ by the tangential h-dimension of $T$ along $V$.   
\end{theorem}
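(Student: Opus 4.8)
The plan is to realise the density data of $(T,[V'])$ through a two-step tangent construction along the flag $V'\subset V\subset X$ — first pass to the tangent current $T_\infty$ of $T$ along $V$, then to a tangent current of $T_\infty$ along $V'$ — and to show that the one-step (balanced) tangent construction directly along $V'$ is dominated in cohomology by this two-step one; the discrepancy is mass that escapes to the divisor at infinity, which is exactly why the conclusion is an inequality rather than an equality.

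First I would localise: the h-dimensions and the classes $\kappa_s$ are local near $V'$, so one may work in a chart adapted to the flag, $X\cong\C^{n-l}_{z'}\times\C^{l-l'}_{z''}\times\C^{l'}_{z'''}$ with $V=\{z'=0\}$ and $V'=\{z'=z''=0\}$, using holomorphic admissible maps so that all dilations below are $\C$-linear. In this model $E=N_{V/X}$ is again $\C^{n-l}_{z'}\times\C^{l-l'}_{z''}\times\C^{l'}_{z'''}$ with $V'\subset V\subset E$ cut out by the same equations, and there is a canonical identification $\overline{N_{V'/X}}\cong\overline{N_{V'/E}}$ under which the two total density classes in the statement are compared (recall that density currents of $(R,[V'])$ are tangent currents of $R$ along $V'$, by \cite{Vu_density-nonkahler}). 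For $(\lambda,\mu)\in(\C^*)^2$ I set $a_{\lambda,\mu}(z',z'',z''')=(\lambda z',\mu z'',z''')$, so that $a_{\lambda,\mu}=d_\mu\circ A_\lambda=A_{\lambda/\mu}\circ B_\mu$ where $A_\lambda:=a_{\lambda,1}$ is the dilation along $V$, $d_\mu:=a_{1,\mu}$, and $B_\mu:=a_{\mu,\mu}$ the dilation along $V'$; applying the mass bound of Theorem~\ref{th-dieukienHVconictangenetcurrent} twice shows that the family $(a_{\lambda,\mu})_*T$ has locally uniformly bounded mass. I then fix a sequence $\lambda_k\to\infty$ with $(A_{\lambda_k})_*T\to T_\infty$.

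Next, using that tangent currents along $V$ are $\R_{>0}$-conic (\cite{Dinh_Sibony_density}), one has $(A_\mu)_*T_\infty=T_\infty$ for $\mu>0$, hence $(B_\mu)_*T_\infty=(d_\mu)_*(A_\mu)_*T_\infty=(d_\mu)_*T_\infty$; so the tangent currents of $T_\infty$ along $V'$ are exactly the subsequential limits, as $\mu\to\infty$, of $(d_\mu)_*T_\infty=\lim_{k\to\infty}(a_{\lambda_k,\mu})_*T$, i.e.\ of the two-parameter dilation in which $z'$ is dilated infinitely faster than $z''$, whereas the tangent currents of $T$ along $V'$ are the subsequential limits of the balanced dilation $(a_{\mu,\mu})_*T$. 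To compare the two I would pair $\kappa_j(T,[V'])$ and $\kappa_j(T_\infty,[V'])$ with an arbitrary positive class on $V'$ and slice the relevant tangent currents over a generic point $x\in V'$; the slice over $x$ is the tangent cone at $x$ of the transverse slice of $T$ (resp.\ of $T_\infty$). Since the linear dilations $A_\lambda$ fix every point of $V\supset\{x\}$, each $(A_\lambda)_*T$ has the same Lelong number at $x$ as $T$, and $T_\infty=\lim_k(A_{\lambda_k})_*T$, so upper semicontinuity of Lelong numbers under weak limits gives that the mass of the tangent cone of $T$ at $x$ is at most that of $T_\infty$ at $x$ — in the balanced construction the additional $z'$-spreading pushes exactly this excess out to $\overline{N_{V'/X}}\setminus N_{V'/X}$. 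Letting $x$ vary over $V'$ this yields $\kappa_j(T,[V'])\le\kappa_j(T_\infty,[V'])$ for every relevant index $j$; taking $j$ equal to the h-dimension of $(T,[V'])$ and using that a pseudoeffective class which is $\le0$ must vanish forces that h-dimension to be at most $s$, after which $\kappa_s(T,[V'])\le\kappa_s(T_\infty,[V'])$ is immediate. The final assertion follows by running the same argument with $V$ in place of $V'$ (equivalently, by using $V'\subset V$).

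The hard part will be the passage from the balanced one-step tangent construction along $V'$ to the two-step one: these genuinely produce different currents — already for a smooth curve meeting $V$ at a point with intersection multiplicity $d>1$, the two-step tangent cone has mass $d$ while the one-step one has mass $1$ — so one must track precisely how much mass escapes to the divisor at infinity of $\overline{N_{V'/X}}$ at each stage and handle the attendant interchange of limits. This bookkeeping is what turns the naively expected equality into the inequality $\le$, and it is controlled by combining the conic invariance of $T_\infty$ with the upper semicontinuity of slice masses / Lelong numbers.
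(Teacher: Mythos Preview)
The paper does not prove this statement: Theorem~\ref{th-sosanhVV'densityDS} is stated as a recalled result, with citation to \cite[Proposition~4.13]{Dinh_Sibony_density}, and is used as a black box in the proof of Corollary~\ref{cor-sosanhlelong}. So there is no ``paper's own proof'' to compare against.

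That said, your sketch captures the right geometric picture --- the two-step tangent construction along the flag $V'\subset V\subset X$ versus the one-step balanced dilation along $V'$, with the discrepancy being mass pushed to the hyperplane at infinity --- and this is indeed the mechanism behind the Dinh--Sibony argument. The point where your write-up becomes genuinely incomplete is the reduction to pointwise Lelong numbers via ``slicing over a generic $x\in V'$''. The classes $\kappa_j$ live in the cohomology of $\overline{N_{V'/X}}$ and are not simply integrals of fibrewise Lelong numbers; commuting the tangent-current construction with slicing (i.e.\ asserting that the slice of a tangent current is the tangent current of the slice) is itself a nontrivial statement that requires proof, and in general only holds for generic slices after further work. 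The Dinh--Sibony proof avoids this by working directly at the level of the cohomology of the compactified normal bundles, using that the two-step and one-step constructions yield currents in the \emph{same} class on $\overline{N_{V'/X}}$ up to a pseudoeffective defect coming from the divisor at infinity. Your Lelong-number inequality $\nu(T_\infty,x)\ge\nu(T,x)$ is correct and is essentially the $\dim V'=0$ case (this is exactly how the paper deduces Corollary~\ref{cor-sosanhlelong}), but it does not by itself carry the higher-dimensional statement.
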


As a consequence, we obtain the following result generalizing the well-known lower bound of Lelong numbers of intersection of $(1,1)$-currents due to Demailly \cite[Page 169]{Demailly_analyticmethod} in the compact setting.

\begin{corollary}\label{cor-sosanhlelong} Let $T_j$ be a closed positive current on $X$ for $1 \le j \le m$.  Then, for every $x \in X$ and for every density current $S$ associated to $T_1, \ldots, T_{m}$, we have 
\begin{align}\label{inesosanhSTjlelong}
\nu(S, x) \ge \nu(T_1, x) \cdots \nu(T_m,x).
\end{align}
\end{corollary}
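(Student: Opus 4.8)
The plan is to reduce the statement to the result just recalled (Theorem \ref{th-sosanhVV'densityDS}) applied iteratively, using the identification of tangent currents along submanifolds with density currents of a current and a current of integration along a point. First I would recall the classical fact (already noted in the excerpt for $m=2$ via \cite[Lemma 2.3]{Vu_density-nonkahler}) that the Lelong number $\nu(T,x)$ of a closed positive current $T$ at a point $x$ equals, up to normalization, the h-dimension-zero component of a tangent current to $T$ along $\{x\}$; more precisely, density currents of $T$ and $[\{x\}]$ recover the tangent cone data, and the Lelong number reads off as the mass of the appropriate slice. The key point is that a density current $S$ associated to $T_1,\dots,T_m$ is a tangent current to $T_1\otimes\cdots\otimes T_m$ along $\Delta_m\subset X^m$, and we want to compare its Lelong number at a point $\hat x=(x,\dots,x)\in\Delta_m$ with the product of the $\nu(T_j,x)$.

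The main step is an inductive argument. Consider $\Delta_m\subset X^m$ and, inside it, the point $\hat x$ viewed as the diagonal submanifold $\{x\}\hookrightarrow\Delta_m$. By Theorem \ref{th-sosanhVV'densityDS} with $V=\Delta_m$, $V'=\{\hat x\}$, $T$ replaced by $\otimes_j T_j$, and $T_\infty$ replaced by the density current $S$, one gets $\kappa_s(\otimes_j T_j,[\hat x])\le\kappa_s(S,[\hat x])$ where $s$ is the relevant h-dimension; since $\{\hat x\}$ is a point, the h-dimension is $0$ and this inequality of classes is exactly an inequality of Lelong numbers: $\nu(S,\hat x)\ge\nu(\otimes_j T_j,\hat x)$. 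So it remains to identify $\nu(\otimes_j T_j,\hat x)$ (the Lelong number of the tensor current on $X^m$ at the diagonal point) with $\prod_j\nu(T_j,x)$. This is a standard local computation: near $\hat x$ the tensor current is $T_1\otimes\cdots\otimes T_m$ on a product of balls, and the Lelong number of a tensor product of currents at a product point is the product of the Lelong numbers — this follows, for instance, from the product structure of the trace measures and Fubini, or from the fact that $\nu(T_1\otimes T_2,(x_1,x_2))=\nu(T_1,x_1)\nu(T_2,x_2)$ which is Demailly's original observation underlying his comparison theorem.

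I expect the main obstacle to be bookkeeping rather than a deep difficulty: one must be careful about which h-dimension appears in Theorem \ref{th-sosanhVV'densityDS} and check that restricting the ambient submanifold $V$ from a tubular neighborhood down to a point does indeed force h-dimension $0$, so that the class inequality degenerates to a scalar (Lelong number) inequality; one must also confirm that the normalization conventions match, i.e.\ that the mass of the h-dimension-$0$ part of a tangent current along a point is precisely the Lelong number and not a dimensional multiple of it. Once these identifications are in place, combining Theorem \ref{th-sosanhVV'densityDS} with the product formula $\nu(\otimes_j T_j,\hat x)=\prod_j\nu(T_j,x)$ yields \eqref{inesosanhSTjlelong} immediately. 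Alternatively, and perhaps more cleanly, one can induct on $m$: a density current of $T_1,\dots,T_m$ can be built from a tangent current to $T_1$ along a suitable submanifold and then intersected with the remaining $T_j$'s, applying Theorem \ref{th-sosanhVV'densityDS} at each stage to pick up one factor $\nu(T_j,x)$ per step; this is the route I would write up if the direct product-of-tensor-currents identity requires more justification than a one-line citation to Demailly.
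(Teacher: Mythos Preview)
Your approach is essentially identical to the paper's: apply Theorem \ref{th-sosanhVV'densityDS} with $V=\Delta_m$ and $V'=\{\hat x\}$ to obtain $\nu(S,\hat x)\ge\nu(\otimes_j T_j,\hat x)$, then bound the latter below by $\prod_j\nu(T_j,x)$. The paper cites \cite[Proposition 5.6]{Dinh_Sibony_density} for the identification $\kappa_0(\cdot,[\{x\}])=\nu(\cdot,x)$ and \cite[Lemma 2.4]{Meo-auto-inter} for the tensor-product bound; note that only the inequality $\nu(\otimes_j T_j,\hat x)\ge\prod_j\nu(T_j,x)$ is needed (and is what Meo proves), not the equality you assert.
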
 

\proof  Let $x \in X$. Let $\Delta_m$ be the diagonal of $X^m$.  Let   $\pi: E \to \Delta_m$ be the natural projection from the normal bundle of the diagonal $\Delta_m$ of $X^m$ in $X^m$. We identify $x$ with a point in $\Delta_m$ via the natural identification $\Delta_m$ with $X$.  Put $T:= \otimes_{j=1}^m T_j$ and $V':=\{x\}$. By \cite[Lemma 2.4]{Meo-auto-inter}, we have $\nu(T,x) \ge \nu(T_1,x) \cdots \nu(T_m,x).$   By \cite[Proposition 5.6]{Dinh_Sibony_density}, we have 
$$\kappa_0(S, [V'])= \nu(S,x), \quad \kappa_0(T,[V'])= \nu(T,x)$$
(notice here $\dim V' =0$).  This combined by Theorem \ref{th-sosanhVV'densityDS} applied to $X^m$, $T:= \otimes_{j=1}^m T_j$, $\Delta_m$ the diagonal of $X^m$ and $V':=\{x\}$  implies 
$$\nu(S,x)  \ge \nu(T,x) \ge \nu(T_1,x) \cdots \nu(T_m,x).$$
Hence, the desired inequality follows.   The proof is finished.  
\endproof

In the next part, we will use density currents to study the loss of mass of non-pluripolar products.  We need to recall basic properties of relative non-pluripolar products.  Let $T_1, \ldots, T_m$ be closed positive $(1,1)$-currents on $X$. By \cite{Viet-generalized-nonpluri}, the $T$-relative non-pluripolar product $\langle \wedge_{j=1}^m T_j \dot{\wedge} T\rangle$ is defined  in a way similar to that of  the usual non-pluripolar product. For readers' convenience, we explain briefly how to do it. 

Write $T_j= \ddc u_j+ \theta_j$, where $\theta_j$ is a smooth form and $u_j$ is a $\theta_j$-psh function. Put 
$$R_k:=\bold{1}_{\cap_{j=1}^m \{u_j >-k\}} \wedge_{j=1}^m (\ddc \max\{u_j,-k\} + \theta_j)\wedge T$$
for $k \in \N$.  By the strong quasi-continuity of bounded psh functions (\cite[Theorems 2.4 and 2.9]{Viet-generalized-nonpluri}), we have 
$$R_k= \bold{1}_{\cap_{j=1}^m \{u_j >-k\}} \wedge_{j=1}^m (\ddc \max\{u_j,-l\} + \theta_j)\wedge T$$
for every $l \ge k \ge 1$. Although it is not an immediate fact, one can check that  $R_k$ is positive (see \cite[Lemma 3.2]{Viet-generalized-nonpluri}). 

As in \cite{BEGZ}, we have that $R_k$ is of mass bounded uniformly in $k$ and $(R_k)_k$ converges to a closed positive current as $k \to \infty$. This limit is denoted by $\langle \wedge_{j=1}^m T_j \dot{\wedge} T\rangle$.  The  last product is, hence,  a well-defined closed positive current of bi-degree $(m+p,m+p)$; and it is  symmetric with respect to $T_1, \ldots, T_m$ and homogeneous. We refer to  \cite[Proposition 3.5]{Viet-generalized-nonpluri} for more properties of relative non-pluripolar products.

For every closed positive $(1,1)$-current $P$, we denote by $I_P$ the set of $x \in U$ so that local potentials of $P$ are equal to $-\infty$ at $x$. Note that $I_P$ is  a locally complete pluripolar set.  It is clear from the definition that $\langle \wedge_{j=1}^m T_j \dot{\wedge} T\rangle$ has no mass on $\cup_{j=1}^m I_{T_j}$. Furthermore, for every locally complete pluripolar set $A$ (\emph{i.e,} $A$ is locally equal to $\{\psi= -\infty\}$ for some psh function $\psi$), if $T$ has no mass on $A$, then so does $\langle \wedge_{j=1}^m T_j \dot{\wedge} T\rangle$.  The following is deduced from \cite[Proposition 3.5]{Viet-generalized-nonpluri}.

\begin{proposition}\label{pro-sublinearnonpluripolar} 
$(i)$ For $R:= \langle \wedge_{j=l+1}^m T_j \dot{\wedge} T \rangle$,  we have $\langle \wedge_{j=1}^m T_j \dot{\wedge} T \rangle = \langle \wedge_{j=1}^l T_j \dot{\wedge} R \rangle$.

$(ii)$ For every complete pluripolar set $A$, we have 
$$\bold{1}_{X \backslash A}\langle  T_1 \wedge T_2 \wedge \cdots \wedge T_m \dot{\wedge} T\rangle= \big\langle  T_1 \wedge T_2 \wedge \cdots \wedge T_m \dot{\wedge}(\bold{1}_{X \backslash A} T)\big\rangle.$$
In particular,  the equality
$$\langle \wedge_{j=1}^m T_j \dot{\wedge} T \rangle = \langle \wedge_{j=1}^m T_j \dot{\wedge} T' \rangle$$
holds, where $T':= \bold{1}_{X \backslash \cup_{j=1}^m I_{T_j}} T$.
\end{proposition}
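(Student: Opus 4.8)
The plan is to derive both parts from the plurifine locality of relative non-pluripolar products and their compatibility with the Bedford--Taylor calculus, which is precisely \cite[Proposition 3.5]{Viet-generalized-nonpluri}; since the present statement is a direct consequence of that result, in the final write-up it would suffice to quote the relevant items. Let me describe the mechanism. Write $T_j=\ddc u_j+\theta_j$ and set $O_k:=\cap_{j=1}^m\{u_j>-k\}$. The first ingredient is that the approximating currents $R_k=\mathbf{1}_{O_k}\wedge_{j=1}^m(\ddc\max\{u_j,-k\}+\theta_j)\wedge T$ form an \emph{increasing} sequence: recomputing $R_k$ with truncation level $-l$ for any $l\ge k$, which is legitimate by the strong quasi-continuity recalled from \cite[Theorems 2.4 and 2.9]{Viet-generalized-nonpluri}, one gets $R_l-R_k=\mathbf{1}_{O_l\setminus O_k}\wedge_{j=1}^m(\ddc\max\{u_j,-l\}+\theta_j)\wedge T\ge0$ since $O_k\subseteq O_l$. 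Consequently $\langle\wedge_{j=1}^m T_j\dot{\wedge} T\rangle=\sup_k R_k$, one has $\mathbf{1}_{O_k}\langle\wedge_{j=1}^m T_j\dot{\wedge} T\rangle=R_k$ for each $k$, and $\cup_k O_k=X\setminus\cup_{j=1}^m I_{T_j}$. The second ingredient, from Bedford--Taylor theory (\cite{Bedford_Taylor_82,BEGZ} and \cite[Section 2]{Viet-generalized-nonpluri}), is the plurifine locality of products of bounded-potential currents against a fixed closed positive current: for bounded $\theta_j$-psh functions $v_j$, a plurifine open set $E$ and a closed positive current $S$, one has $\mathbf{1}_E[\wedge_j(\ddc v_j+\theta_j)\wedge S]=\wedge_j(\ddc v_j+\theta_j)\wedge(\mathbf{1}_E S)$, and this product depends on the $v_j$ only through their restrictions to $E$.

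For part $(ii)$: note that $\mathbf{1}_{X\setminus A}T$ is again a closed positive current and $X\setminus A$ is plurifine open. The $k$-th approximant of $\langle\wedge_{j=1}^m T_j\dot{\wedge}(\mathbf{1}_{X\setminus A}T)\rangle$ is $\mathbf{1}_{O_k}\wedge_{j=1}^m(\ddc\max\{u_j,-k\}+\theta_j)\wedge(\mathbf{1}_{X\setminus A}T)$, and by the locality identity this equals $\mathbf{1}_{X\setminus A}R_k$. Letting $k\to\infty$, and using that restriction to the fixed Borel set $X\setminus A$ commutes with the supremum of the increasing sequence $(R_k)$, yields $\langle\wedge_{j=1}^m T_j\dot{\wedge}(\mathbf{1}_{X\setminus A}T)\rangle=\mathbf{1}_{X\setminus A}\langle\wedge_{j=1}^m T_j\dot{\wedge} T\rangle$. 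The ``in particular'' assertion then follows by taking $A=\cup_{j=1}^m I_{T_j}$, which is complete pluripolar, and using that $\langle\wedge_{j=1}^m T_j\dot{\wedge} T\rangle$ puts no mass on $\cup_{j=1}^m I_{T_j}$, so that multiplying by $\mathbf{1}_{X\setminus A}$ changes nothing.

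For part $(i)$: I would show that the two sides agree on each plurifine open set $O_k$ and that both put no mass on $\cup_{j=1}^m I_{T_j}$; since a closed positive current with no mass on a pluripolar set is determined by its restriction to the complement, and $\cup_k O_k=X\setminus\cup_{j=1}^m I_{T_j}$, this forces them to coincide. Both put no mass on $\cup_j I_{T_j}$: the left side by construction; the right side because $\langle\wedge_{j=1}^l T_j\dot{\wedge} R\rangle$ has no mass on $\cup_{j\le l}I_{T_j}$, while $R$ has no mass on the complete pluripolar set $\cup_{j>l}I_{T_j}$ and hence so does $\langle\wedge_{j=1}^l T_j\dot{\wedge} R\rangle$ by the stability of the no-mass property recalled just before the proposition. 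For the agreement on $O_k$, the same monotonicity argument applied to the sequence defining $\langle\wedge_{j=1}^l T_j\dot{\wedge} R\rangle$, together with $O_k\subseteq\cap_{j\le l}\{u_j>-k\}$, gives $\mathbf{1}_{O_k}\langle\wedge_{j=1}^l T_j\dot{\wedge} R\rangle=\mathbf{1}_{O_k}\wedge_{j\le l}(\ddc\max\{u_j,-k\}+\theta_j)\wedge R$; replacing here $R$ by $\mathbf{1}_{O'_k}R=\mathbf{1}_{O'_k}\wedge_{j>l}(\ddc\max\{u_j,-k\}+\theta_j)\wedge T$ with $O'_k:=\cap_{j>l}\{u_j>-k\}\supseteq O_k$, and pushing the indicators through the bounded-potential wedges by the locality identity, the expression collapses to $\mathbf{1}_{O_k}\wedge_{j=1}^m(\ddc\max\{u_j,-k\}+\theta_j)\wedge T=R_k=\mathbf{1}_{O_k}\langle\wedge_{j=1}^m T_j\dot{\wedge} T\rangle$.

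The main obstacle is this plurifine-locality bookkeeping: that a Bedford--Taylor product can be multiplied by a plurifine-open indicator which is then moved inside the wedge, and that the product is insensitive to altering bounded potentials off that open set. All of this is part of the Bedford--Taylor calculus in the relative setting developed in \cite[Section 2 and Proposition 3.5]{Viet-generalized-nonpluri}, so once those statements are quoted the argument above becomes a short bookkeeping --- which is presumably why the proposition is stated as a direct consequence of \cite[Proposition 3.5]{Viet-generalized-nonpluri}.
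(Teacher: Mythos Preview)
Your proposal is correct and is exactly the route the paper takes: the paper offers no argument beyond recording that the proposition follows from \cite[Proposition 3.5]{Viet-generalized-nonpluri}, and your write-up faithfully unpacks that citation via the monotonicity of the approximants $R_k$ and the plurifine locality in both the potential slot and the $T$-slot (\cite[Theorems 2.4, 2.9]{Viet-generalized-nonpluri}). The only cosmetic point is that your displayed identity $\mathbf{1}_E[\wedge_j(\ddc v_j+\theta_j)\wedge S]=\wedge_j(\ddc v_j+\theta_j)\wedge(\mathbf{1}_E S)$ should be read as the statement that the left-hand side depends on $S$ only through $\mathbf{1}_E S$ (since $\mathbf{1}_E S$ need not be closed for an arbitrary plurifine open $E$), which is precisely how you use it in part~$(i)$.
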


Here is a crucial property of relative non-pluripolar products.

\begin{theorem} \label{th-monoticity} (\cite[Theorem 1.1]{Viet-generalized-nonpluri})  Let $T'_j$ be closed positive $(1,1)$-current in the cohomology class of $T_j$ on $X$ such that $T'_j$ is less singular than $T_j$ for $1 \le j \le m$. Then we have 
$$\{\langle T_1 \wedge \cdots \wedge T_m \dot{\wedge} T \rangle \} \le \{\langle T'_1 \wedge \cdots \wedge T'_m \dot{\wedge} T \rangle\}.$$
\end{theorem}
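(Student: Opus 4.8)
First I would reduce to the case $m=1$, and then prove the relative monotonicity by comparing the two potentials with an intermediate one obtained as an envelope with prescribed singularities, so that the entire loss of mass gets concentrated on a contact set.

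\emph{Reduction to $m=1$.} Write $T_j=\theta_j+\ddc u_j$ and $T'_j=\theta_j+\ddc u'_j$. As the relative product depends on $T_j$ only through $\theta_j+\ddc u_j$, and ``$T'_j$ less singular than $T_j$'' means $u_j\le u'_j+O(1)$, after subtracting constants we may assume $u_j\le u'_j$. By the tower property (Proposition \ref{pro-sublinearnonpluripolar}(i)) and symmetry of the relative product, it is enough to change one factor at a time: with $R:=\langle T_2\wedge\cdots\wedge T_m\dot{\wedge}T\rangle$ a closed positive current, Proposition \ref{pro-sublinearnonpluripolar}(i) gives $\langle T_1\wedge\cdots\wedge T_m\dot{\wedge}T\rangle=\langle T_1\dot{\wedge}R\rangle$, and one iterates (change $T_1$ to $T'_1$, then $T_2$ to $T'_2$ by symmetry, and so on). So the task becomes: for every closed positive current $T$ on $X$ of the appropriate bidegree and all $\theta$-psh $u\le u'$,
\begin{align*}
\big\{\langle(\theta+\ddc u)\,\dot{\wedge}\,T\rangle\big\}\ \le\ \big\{\langle(\theta+\ddc u')\,\dot{\wedge}\,T\rangle\big\}.
\end{align*}
By Proposition \ref{pro-sublinearnonpluripolar}(ii) applied to the complete pluripolar set $\{u=-\infty\}$, neither side changes if we replace $T$ by $\mathbf{1}_{\{u>-\infty\}}T$; so we may also assume $T$ has no mass on $\{u=-\infty\}$.

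\emph{The envelope comparison.} Next I would introduce $\phi:=P_\theta[u](u')$, the envelope of $u'$ with the singularity type of $u$ in the sense of \cite{Lu-Darvas-DiNezza-mono}, that is
\begin{align*}
\phi:=\Big(\lim_{c\to+\infty}P_\theta\big(\min\{u',\,u+c\}\big)\Big)^{*},\qquad P_\theta(f):=\Big(\sup\{w\in\PSH(X,\theta):\ w\le f\}\Big)^{*},
\end{align*}
which satisfies $u\le\phi\le u'$. The proof would then be completed from two facts: \textbf{(a)} (orthogonality) $\langle(\theta+\ddc\phi)\dot{\wedge}T\rangle$ is carried by the contact set $\{\phi=u'\}$ (the a priori extra locus $\{u=-\infty\}$ is not charged, by our normalization and the fact that the relative product charges no pluripolar set on which $T$ has no mass), so by plurifine locality $\langle(\theta+\ddc\phi)\dot{\wedge}T\rangle=\mathbf{1}_{\{\phi=u'\}}\langle(\theta+\ddc u')\dot{\wedge}T\rangle$; hence $\langle(\theta+\ddc u')\dot{\wedge}T\rangle-\langle(\theta+\ddc\phi)\dot{\wedge}T\rangle$ is a closed positive current and $\{\langle(\theta+\ddc\phi)\dot{\wedge}T\rangle\}\le\{\langle(\theta+\ddc u')\dot{\wedge}T\rangle\}$; and \textbf{(b)} (conservation of class) $\{\langle(\theta+\ddc\phi)\dot{\wedge}T\rangle\}=\{\langle(\theta+\ddc u)\dot{\wedge}T\rangle\}$. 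Combining (a) and (b) gives the inequality. The tool behind both is the canonical truncation $u_k:=\max\{u,-k\}$ together with the \emph{exact} identity $(\theta+\ddc\varphi)\wedge T-(\theta+\ddc\psi)\wedge T=\ddc\big((\varphi-\psi)T\big)$ for bounded $\theta$-psh $\varphi,\psi$: at the level of truncations no mass is lost, and the content of (a)--(b) is that the boundary contributions carried by the level sets $\{u=-k\}$ and $\{\phi=-k\}$ disappear from the cohomology class as $k\to\infty$. Fact (a) should be the soft one, following from the comparison principle for bounded potentials and the strong quasi-continuity of bounded $\theta$-psh functions (\cite[Theorems 2.4 and 2.9]{Viet-generalized-nonpluri}), just as in the classical orthogonality relation for envelopes.

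\emph{The main obstacle.} The hard part will be fact (b) --- that pushing $u$ up to the envelope $\phi$ does not decrease the relative cohomology class. Since $u\le\phi$, this is itself a special case of the monotonicity being proved and so cannot be quoted; instead it has to be proved directly from the variational structure of $\phi$, by approximating $\phi$ from below by the Perron envelopes $P_\theta(\min\{u',u+c\})$ of bounded potentials as $c\to+\infty$, using the comparison principle, and showing that no Monge--Amp\`ere mass escapes in the limit. This is precisely the step into which the known cohomological monotonicity theorems put their work (\cite{BEGZ,WittNystrom-mono,Lu-Darvas-DiNezza-mono}); here one must carry out that argument for the relative product $\langle\,\cdot\,\dot{\wedge}\,T\rangle$, controlling the relevant Monge--Amp\`ere capacities and the boundary terms $\int_{\{u=-k\}}(\cdots)$ through the properties in Proposition \ref{pro-sublinearnonpluripolar} and \cite[Proposition 3.5]{Viet-generalized-nonpluri} together with strong quasi-continuity. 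I expect this transplantation to the relative setting --- the relative comparison principle and the vanishing of the boundary terms --- to be the principal difficulty; once it is in hand, the remaining steps are routine.
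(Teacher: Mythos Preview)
The paper does not prove this statement; it is quoted as \cite[Theorem 1.1]{Viet-generalized-nonpluri} and only used as input, so there is no proof here to compare against.

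On your proposal itself: the reduction to $m=1$ via the tower property (Proposition~\ref{pro-sublinearnonpluripolar}\,(i)) and the normalization $\mathbf 1_{\{u>-\infty\}}T$ are fine. The gap is in fact~(a). The orthogonality relation for the envelope $\phi=P_\theta[u](u')$ says that the \emph{top-degree} non-pluripolar measure $\langle\theta_\phi^n\rangle$ is carried by $\{\phi=u'\}$; this is because $\phi$ is maximal on the open set $\{\phi<u'\}$, i.e.\ $(\theta+\ddc\phi)^n=0$ there. But maximality does \emph{not} force $\theta+\ddc\phi=0$ on $\{\phi<u'\}$, so for a general closed positive current $T$ the relative product $\langle\theta_\phi\dot{\wedge}T\rangle$ can have mass on the non-contact set. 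Concretely, take $X=\P^2$, $\theta=\omega_{\FS}$, $u'\equiv 0$, $u$ a potential with a single logarithmic pole at a point $p$, and $T=[L]$ with $L$ a line not through $p$. Then $\phi$ is essentially the pluricomplex Green function with pole at $p$; it is smooth near $L$, so $\langle\theta_\phi\dot{\wedge}[L]\rangle=(\theta+\ddc\phi)\wedge[L]$ is a smooth positive measure on $L$ with no reason to be concentrated on $\{\phi=0\}\cap L$. Thus your claimed identity $\langle\theta_\phi\dot{\wedge}T\rangle=\mathbf 1_{\{\phi=u'\}}\langle\theta_{u'}\dot{\wedge}T\rangle$ fails, and with it the deduction that $\langle\theta_{u'}\dot{\wedge}T\rangle-\langle\theta_\phi\dot{\wedge}T\rangle$ is a positive current.

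So the step you flagged as ``soft'' is actually where the envelope method breaks down in the relative setting; the difficulty is not only in~(b). A working proof of the $m=1$ case has to proceed differently---for instance by comparing the truncated currents $\mathbf 1_{\{u>-k\}}(\theta+\ddc u_k)\wedge T$ and $\mathbf 1_{\{u'>-k\}}(\theta+\ddc u'_k)\wedge T$ directly, using that on $\{u>-k\}\subset\{u'>-k\}$ the bounded potentials $u_k,u'_k$ give cohomologous products with $T$, and then controlling what happens on the collars $\{u'>-k\}\setminus\{u>-k\}$ in the limit. That is the kind of argument carried out in \cite{Viet-generalized-nonpluri}; the envelope shortcut from \cite{WittNystrom-mono,Lu-Darvas-DiNezza-mono} does not transplant verbatim when an arbitrary $T$ is present.
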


Weaker versions of the above result were proved in  \cite{BEGZ,Lu-Darvas-DiNezza-mono,WittNystrom-mono}.  Regarding the relation between relative non-pluripolar products and density currents, the following fact was proved in \cite[Theorem 3.5]{Viet-density-nonpluripolar}, see also \cite{VietTuanLucas,Viet_Lucas}.

\begin{theorem} \label{the-phanbucuarestricteddenstyvanonpluri}  Let $R_\infty$ be a density current associated to $T_1, \ldots, T_m,T$. Then we have 
\begin{align} \label{ine_TjS2}
\pi^* \langle \wedge_{j=1}^m T_j  \dot{\wedge} T \rangle \le R_\infty,
\end{align} 
where $\pi$ is the natural projection from the normal bundle of the diagonal $\Delta$ of $X^{m+1}$ to $\Delta$, and as usual we identified $\Delta$ with $X$. 
\end{theorem}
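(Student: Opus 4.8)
The plan is to reduce the statement to a local computation near the diagonal, and then to identify the local contribution of the relative non-pluripolar product inside a density current by a regularization-and-monotonicity argument analogous to the one used in \cite{Viet-density-nonpluripolar} for the absolute case. First I would set $T = \otimes_{j=1}^m T_j \otimes T$ on $X^{m+1}$, write each $T_j = \ddc u_j + \theta_j$ with $\theta_j$ smooth and $u_j$ a $\theta_j$-psh function, fix an admissible map $\tau$ defined on a tubular neighborhood of the diagonal $\Delta \subset X^{m+1}$, and choose a sequence $\lambda_k \to \infty$ so that $R_\infty = \lim_k (A_{\lambda_k})_* \tau_* T$. Since the desired inequality $\pi^*\langle \wedge_{j=1}^m T_j \dot{\wedge} T\rangle \le R_\infty$ is a statement about positive currents, it suffices to prove it on each member of an open cover of $\Delta$ by small local charts, and on such a chart we may take $\tau$ to be holomorphic, coming from holomorphic coordinates; this is what makes the truncation estimates manageable.

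The key step is the local comparison. On a local chart I would introduce the truncated currents $R_N := \bold{1}_{\cap_j\{u_j > -N\}}\, \wedge_{j=1}^m(\ddc\max\{u_j,-N\}+\theta_j)\wedge T$ whose limit as $N\to\infty$ is $\langle\wedge_{j=1}^m T_j\dot\wedge T\rangle$. The point is that $\max\{u_j,-N\}$ has bounded local potentials, so $\max\{u_j,-N\}\circ\tau^{-1}$ pulls back and dilates well under $(A_{\lambda})_*$, and the Bedford–Taylor product of bounded-potential currents is continuous under the relevant limits; hence the tangent current of the truncated object along $\Delta$ equals $\pi^*$ of a corresponding truncated non-pluripolar product in cohomology-compatible fashion, and in particular dominates $\pi^* R_N$ restricted to the zero-dilation limit. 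One then lets $N\to\infty$. The crucial positivity input is that each $R_N$ is an increasing-in-$N$ sequence on the plurifine-open sets where it is supported (monotonicity of relative non-pluripolar products, \cite[Proposition 3.5]{Viet-generalized-nonpluri}), together with the fact that the density current, being a limit of push-forwards of the \emph{full} current $T$ under dilations, can only be larger than the push-forward of any truncation: the mass that the non-pluripolar product "loses'' is precisely the mass that the tangent current retains as a transversal component. Concretely I would argue that for each fixed $N$, $(A_{\lambda_k})_*\tau_* T \ge (A_{\lambda_k})_*\tau_*\big(\bold{1}_{\cap_j\{u_j>-N\}} \wedge_j(\ddc\max\{u_j,-N\}+\theta_j)\wedge T\big)$ fails to be literally true (the truncation is not dominated by $T$), so instead I would use the description of $R_\infty$ via slicing against test forms and the fact that on $\cap_j\{u_j > -N\}$ the current $T$ agrees with $\wedge_j(\ddc\max\{u_j,-N\}+\theta_j)\wedge T$ in the Bedford–Taylor sense; passing to the dilated limit on this plurifine-open set and then taking $N\to\infty$ produces $\pi^*\langle\wedge_j T_j\dot\wedge T\rangle$ as a lower bound, using lower semicontinuity of mass on open sets and positivity of the remaining part of $R_\infty$.

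I expect the main obstacle to be making the truncation argument rigorous through the dilation limit: the sets $\cap_j\{u_j > -N\}\circ\tau^{-1}$ move under $A_{\lambda_k}$, and one must control the interaction between the plurifine-open truncation domains and the non-transverse part of $T$ along $\Delta$ (where Lelong numbers concentrate), to be sure no mass is double-counted and that the limit current one extracts from the truncated family is exactly $\pi^*$ of the relative product rather than something strictly smaller. The clean way around this is to invoke Theorem \ref{th-monoticity} (monotonicity of relative non-pluripolar products under less-singular representatives): replacing $u_j$ by $\max\{u_j,-N\}$ is passing to a less singular potential, so $\{\langle\wedge_j(\ddc\max\{u_j,-N\}+\theta_j)\dot\wedge T\rangle\}$ decreases to $\{\langle\wedge_j T_j\dot\wedge T\rangle\}$ from above, and correspondingly the tangent currents, which for bounded potentials are computed by the continuous Bedford–Taylor calculus and equal $\pi^*$ of the (now genuine, mass-preserving) products, dominate $\pi^*\langle\wedge_j T_j\dot\wedge T\rangle$ in the limit. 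Assembling these local inequalities via a partition of unity subordinate to the cover of $\Delta$ and using that both sides are globally well-defined closed positive currents on the normal bundle then yields the stated global inequality.
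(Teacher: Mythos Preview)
First, note that the paper does not itself prove this theorem: it is quoted from \cite[Theorem 3.5]{Viet-density-nonpluripolar} (with related references \cite{VietTuanLucas,Viet_Lucas}), so there is no in-paper proof to compare against directly. Your overall strategy---truncate the potentials, use that for bounded potentials the density current is unique and equals $\pi^*$ of the classical Bedford--Taylor product, then let the truncation level $N\to\infty$---is the natural one and is indeed the skeleton of the argument in \cite{Viet-density-nonpluripolar}.

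However, your write-up has a genuine gap at the comparison step. You establish that the density current of the \emph{truncated} system $(T_{1,N},\ldots,T_{m,N},T)$, with $T_{j,N}=\ddc\max\{u_j,-N\}+\theta_j$, is $\pi^*(\wedge_j T_{j,N}\wedge T)$, and that this dominates $\pi^* R_N$, hence $\pi^*\langle\wedge_j T_j\dot\wedge T\rangle$ in the limit. But you never connect any of these objects to $R_\infty$, which is a density current of the \emph{untruncated} system. Your proposed fix via Theorem~\ref{th-monoticity} cannot work: that theorem compares \emph{cohomology classes} of relative non-pluripolar products, not currents, and it says nothing about density currents at all. From your argument one gets $\pi^*\langle\wedge_j T_j\dot\wedge T\rangle\le \pi^*(\wedge_j T_{j,N}\wedge T)$ for each $N$, which is true but irrelevant to bounding $R_\infty$ from below.

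What is actually required---and what you correctly sense in your middle paragraph before abandoning it---is a local argument showing that $R_\infty$ itself, restricted over the set where all $u_j>-N$, dominates $\pi^*\big(\bold{1}_{\cap_j\{u_j>-N\}}\wedge_j T_{j,N}\wedge T\big)$. This uses that on this set the tensor products $\otimes_j T_j\otimes T$ and $\otimes_j T_{j,N}\otimes T$ coincide in a sense that survives the dilation $(A_\lambda)_*$, combined with lower semicontinuity of the mass of positive currents on open sets under weak limits; one then lets $N\to\infty$. Your instinct (``lower semicontinuity of mass on open sets and positivity of the remaining part of $R_\infty$'') was right; the detour through cohomological monotonicity is the wrong turn.
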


We will need the following to estimate the density h-dimension of currents. 

\begin{proposition} \label{pro-uocluongmassonsmallsetdensity} (\cite[Proposition 3.6]{Viet-density-nonpluripolar}) Let $A$ be a Borel subset of $X$. Assume that for every density current $R_J$ associated to $(T_j)_{j \in J},T$ with $m_J:=|J|<m$, we have that $R_J$ has  no mass on the set $$\pi_{m_J+1}^*(\cap_{j \not \in J} \{x \in A: \nu(T_j, x) >0\}).$$
 Then for every density current $S$ associated to $T_1, \ldots, T_m,T$, the h-dimension of the current $\bold{1}_A S$ is equal to $n-p-m$. 
\end{proposition}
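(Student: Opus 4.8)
The plan is to reduce the claim to the minimal h-dimension criterion of Dinh--Sibony by showing that the h-dimensions of density currents associated to proper subfamilies $(T_j)_{j\in J}$, $T$ are minimal \emph{in a local sense near the relevant small set}. Concretely, recall from the theory that a closed positive current $S$ of bidegree $(p',p')$ on the normal bundle $E\to\Delta$ has h-dimension $>$ minimal precisely when $S\wedge\pi^*\theta^{\,l-p'+1}\ne 0$ for a metric $\theta$ on $\Delta\cong X$; and a current of minimal h-dimension pulls back from the base by Lemma \ref{le-minimalhdimension}. So the statement is equivalent to proving that $\bold{1}_A S\wedge\pi_{m+1}^*\theta^{\,n-p-m+1}=0$. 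First I would peel off one current at a time: write $S$ as a density current associated to $T_1,\dots,T_m,T$ and, using the slicing/associativity structure of density currents (the analogue of Proposition \ref{pro-sublinearnonpluripolar}(i) on the normal bundle side, together with Theorem \ref{the-phanbucuarestricteddenstyvanonpluri} and Theorem \ref{th-sosanhVV'densityDS}), relate the mass of $\bold{1}_A S$ on large-h-dimension strata to the mass of density currents $R_J$ associated to the subfamilies on the set $\pi_{m_J+1}^*(\cap_{j\notin J}\{x\in A:\nu(T_j,x)>0\})$.

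The key step is the following dichotomy. Fix a point $x\in A$. Either $\nu(T_j,x)>0$ for \emph{all} $j$, in which case $x$ lies in the set on which we have hypothesized that every $R_J$ with $J\ne\{1,\dots,m\}$ has no mass; or $\nu(T_{j_0},x)=0$ for some $j_0$, and then near $x$ the potential $u_{j_0}$ of $T_{j_0}$ has vanishing Lelong number, so $T_{j_0}$ is "not too singular" there. I would use the comparison Corollary \ref{cor-sosanhlelong}: for the tensor current $T_1\otimes\cdots\otimes T_m\otimes T$ along the diagonal, any density current $S$ satisfies $\nu(S,y)\ge\prod_j\nu(T_j,y)\cdot\nu(T,y)$ at points $y$ identified with $x\in\Delta$; conversely the obstruction to extra h-dimension is exactly carried by the fibers over the locus where the product of Lelong numbers is positive. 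Combining: the part of $\bold{1}_A S$ that could possibly have h-dimension exceeding $n-p-m$ must be supported over $\cap_{j}\{x\in A:\nu(T_j,x)>0\}\cup(\text{contributions from }R_J)$, and the hypothesis kills all of the latter while a direct argument (currents with continuous potentials contribute only minimal h-dimension, cf.\ Lemma \ref{le-classDSproduct} and the Bedford--Taylor regularity) handles the former once one more current has been separated off. Iterating the peeling over $|J|=m-1,m-2,\dots$ down to $J=\varnothing$, each reduction uses the inductive hypothesis on $R_J$, and one concludes $\bold{1}_A S\wedge\pi_{m+1}^*\theta^{\,n-p-m+1}=0$, hence $\bold{1}_A S$ has minimal h-dimension $n-p-m$.

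I would organize the induction on $m$: for $m=1$ the statement is essentially \cite[Proposition 4.13]{Dinh_Sibony_density} combined with the fact that $\bold{1}_A T$ has no mass on $\{x\in A:\nu(T,x)>0\}$ being vacuous-friendly, and the base case is where the "no mass" hypothesis is read directly. For the inductive step, apply the associativity $R:=\langle\wedge_{j=2}^m T_j\,\dot\wedge\,T\rangle$ type decomposition on the normal-bundle side to express $S$ via a density current of the pair $T_1$, (a current representing $R$'s class), then invoke Theorem \ref{th-sosanhVV'densityDS} to dominate h-dimension of $\bold{1}_A S$ by that of a two-factor density current, for which the Demailly-type bound of Corollary \ref{cor-sosanhlelong} pins down the excess-dimension locus. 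The main obstacle I anticipate is the bookkeeping of \emph{where} mass can concentrate after each peeling: one must show that passing from the full family to a subfamily does not create mass on sets larger than $\pi^*(\cap_{j\notin J}\{\nu(T_j,\cdot)>0\}\cap A)$, and this requires carefully tracking that the relative non-pluripolar product $\langle\wedge T_j\dot\wedge T\rangle$ charges no mass on $\cup_j I_{T_j}$ (already noted in the excerpt) together with the inequality $\pi^*\langle\wedge_{j=1}^m T_j\dot\wedge T\rangle\le R_\infty$ of Theorem \ref{the-phanbucuarestricteddenstyvanonpluri} to control the minimal-h-dimension part of $S$ from below, leaving only the excess to be estimated. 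Getting the interplay between these two inequalities quantitatively compatible — upper bound from the subfamily hypothesis, lower bound from the relative product — is the delicate point.
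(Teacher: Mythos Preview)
First, a structural point: this proposition is not proved in the present paper at all. It is quoted verbatim from \cite[Proposition 3.6]{Viet-density-nonpluripolar} and used as a black box (in the proof of Lemma \ref{le-truonghom=1lelong}). So there is no ``paper's own proof'' here to compare against; any assessment of your argument has to be on its internal merits.

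On those merits, there is a genuine gap. Your entire inductive mechanism rests on an \emph{associativity/factorization} principle for density currents: you want to ``peel off'' $T_1$ and write the density current $S$ associated to $T_1,\dots,T_m,T$ as a density current of the pair $T_1$ and some current built from $T_2,\dots,T_m,T$ (your phrase: ``express $S$ via a density current of the pair $T_1$, (a current representing $R$'s class)''), invoking ``the analogue of Proposition \ref{pro-sublinearnonpluripolar}(i) on the normal bundle side''. No such analogue is available. Density currents are tangent currents to $\otimes_j T_j\otimes T$ along the full diagonal of $X^{m+1}$; they do not in general factor through tangent currents along intermediate diagonals, and the relative non-pluripolar associativity in Proposition \ref{pro-sublinearnonpluripolar} has no counterpart for density currents without substantial additional work. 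Theorem \ref{th-sosanhVV'densityDS} compares tangent classes along nested \emph{submanifolds} $V'\subset V$, not density currents of subfamilies of the $T_j$, so it does not supply the missing step either. Likewise, Corollary \ref{cor-sosanhlelong} gives only a \emph{lower} bound $\nu(S,x)\ge\prod_j\nu(T_j,x)\nu(T,x)$; it says nothing about where the excess h-dimension of $S$ is supported, which is an \emph{upper}-bound statement.

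What the actual argument in \cite{Viet-density-nonpluripolar} does is work locally in holomorphic coordinates where the admissible map is explicit, exploit that each $T_j$ is a $(1,1)$-current with a \emph{potential} $u_j$, and analyze directly the limit $(A_\lambda)_*\tau_*(T_1\otimes\cdots\otimes T_m\otimes T)$ wedged with the excess power $\pi^*\theta^{\,n-p-m+1}$. The key computation shows that mass of this wedge over $A$ can only survive in the limit over points where, for each $j$ in some nonempty complement $J^c$, the potential $u_j$ has a positive Lelong number, and that the surviving mass is controlled by a density current of the subfamily $(T_j)_{j\in J},T$ restricted to $\pi_{m_J+1}^{-1}(\cap_{j\notin J}\{\nu(T_j,\cdot)>0\}\cap A)$ --- which vanishes by hypothesis. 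The induction is on $m$, but the reduction from $m$ to smaller $|J|$ is carried out by this local potential-theoretic estimate, not by any abstract associativity of density currents. Your sketch identifies the right \emph{sets} and the right inductive hypothesis, but the bridge between them is the hard analytic content of the result, and it is missing.
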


Let $P$ and $T$ be closed positive current of bi-degree $(1,1)$ and  $(p,p)$ respectively on $X$, where $1 \le p \le n-1$. 

\begin{lemma} \label{le-truonghom=1lelong} Assume that $T$ has no mass on $I_{P}$.  Then, the cohomology class 
$$\gamma:= \{P\} \wedge \{T\}- \{\langle P \dot{\wedge} T\rangle \}$$
 is pseudoeffective and  we have 
\begin{align}\label{ine-uocluonggammaVPT}
\| \gamma\| \ge \sum_{V} \nu(P,V)\nu(T,V) \vol(V),
\end{align}
where the sum is taken over every irreducible subset $V$ of dimension at least $n-p-1$ in $X$.   
\end{lemma}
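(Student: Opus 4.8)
The strategy is to reduce the global statement to a local one near each $V\in\cali{V}$, and then invoke the already-established inequality $\pi^*\langle P\dot\wedge T\rangle\le R_\infty$ from Theorem \ref{the-phanbucuarestricteddenstyvanonpluri} together with the Lelong-number comparison of Corollary \ref{cor-sosanhlelong}. First I would check that $\gamma$ is pseudoeffective: by Theorem \ref{th-monoticity}, replacing $P$ by a current $P'$ in $\{P\}$ with minimal singularities we get $\{\langle P'\dot\wedge T\rangle\}\ge\{\langle P\dot\wedge T\rangle\}$, and since $P'$ has zero Lelong numbers along hypersurfaces and $\langle P'\dot\wedge T\rangle$ can be compared with $\{P\}\wedge\{T\}$ via the mass inequality (\ref{ine-sosanhmassT1denTmomega1m}) generalized to the relative setting (\cite[Theorem 1.1]{Viet-generalized-nonpluri}), one obtains $\{P\}\wedge\{T\}-\{\langle P'\dot\wedge T\rangle\}$ pseudoeffective, hence $\gamma$ pseudoeffective.

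For the quantitative bound, I would argue that for each irreducible $V$ of dimension $l:=\dim V\ge n-p-1$ with $\nu(P,V)\nu(T,V)>0$, the class $\gamma$ ``dominates'' the current $\nu(P,V)\nu(T,V)[V]\wedge\omega^{\,p+1-(n-l)}$ in a suitable sense. The mechanism: let $S$ be a density current associated to $P,T$. By Theorem \ref{the-phanbucuarestricteddenstyvanonpluri}, $R_\infty - \pi^*\langle P\dot\wedge T\rangle\ge 0$, and its cohomology class on the normal bundle $\overline{E}$ of $\Delta\subset X^2$ is $\kappa(P,T)-\pi^*\{\langle P\dot\wedge T\rangle\}$. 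The total density class $\kappa(P,T)$ decomposes (using the minimal h-dimension computation, Lemma \ref{le-classDSproduct}, on the locus where Lelong numbers vanish, via Proposition \ref{pro-uocluongmassonsmallsetdensity}) as $\pi^*(\{P\}\wedge\{T\})$ plus contributions supported over $\{x:\nu(P,x)\nu(T,x)>0\}\supseteq V$; and on each such $V$, Corollary \ref{cor-sosanhlelong} forces the tangent current to carry Lelong number at least $\nu(P,V)\nu(T,V)$ generically along $\pi^{-1}(V)$, which translates into a positive current of the form $\nu(P,V)\nu(T,V)\,\pi^*[V]\wedge(\text{fiber class})$ sitting inside $R_\infty-\pi^*\langle P\dot\wedge T\rangle$. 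Pushing forward by $\pi$ and using $\{S\}|_\Delta=\{P\otimes T\}|_\Delta$ together with the trivial extension across $\overline E\setminus E$, the cohomological content over $V$ survives and yields $\gamma\ge\sum_V\nu(P,V)\nu(T,V)\{[V]\}\wedge\omega_{\mathrm{transverse}}$; pairing with $\omega^{\,\bullet}$ and using $\vol(V)=\int_V\omega^{\dim V}$ gives (\ref{ine-uocluonggammaVPT}). One must be careful to pass from ``generic Lelong number along $V$'' to an honest lower bound for the pushed-forward class; this is where the relative non-pluripolar product is used as the bridge, since $\langle P\dot\wedge T\rangle$ itself has no mass on $I_P$ (the hypothesis on $T$) so the loss of mass is concentrated exactly where the Lelong numbers are positive.

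The main obstacle I anticipate is organizing the \emph{additivity over the different $V$'s}: the $V\in\cali{V}$ may intersect, so one cannot naively sum the local contributions. The fix should be to work with maximal $V$'s and the relative product's locality (Proposition \ref{pro-sublinearnonpluripolar}(ii)), decomposing $\bold{1}_{X\setminus\cup I_{\bullet}}$ and exploiting that the density current's mass over $\pi^{-1}(V)$ for distinct maximal irreducible $V$ lives over disjoint generic parts; alternatively, since we only need a \emph{lower} bound and all terms are nonnegative, one can extract one $V$ at a time by restricting attention to a generic point of $V$ outside all other $V'\in\cali{V}$ and summing the resulting disjoint cohomological contributions. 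A secondary technical point is that $p$ here is arbitrary (not just $p=1$), so $T$ is a $(p,p)$-current and one works on the normal bundle of the diagonal in $X^2$ with the h-dimension bookkeeping of Proposition \ref{pro-uocluongmassonsmallsetdensity}; this is routine given the cited results but needs to be spelled out. Finally, one checks the estimate is independent of any desingularization of $V$ — which it is, since only $\nu(P,V)$, $\nu(T,V)$ and $\vol(V)=\int_V\omega^{\dim V}$ enter — this being precisely the advantage over the earlier approaches mentioned in the introduction.
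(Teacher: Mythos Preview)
Your proposal assembles the right tools but misses the single observation that makes the argument clean, and as written several steps do not go through. The key point is that the hypothesis ``$T$ has no mass on $I_P$'' lets you apply Proposition \ref{pro-uocluongmassonsmallsetdensity} with $A=X$ (not just on the locus where Lelong numbers vanish): for $m=1$ the only subset is $J=\varnothing$, and one needs $T$ to have no mass on $\{x:\nu(P,x)>0\}\subset I_P$, which is exactly the hypothesis. Hence the density h-dimension of $P,T$ is \emph{globally} minimal. By Lemma \ref{le-classDSproduct} the total density class is then exactly $\pi^*(\{P\}\wedge\{T\})$ --- there are no ``contributions supported over $\{x:\nu(P,x)\nu(T,x)>0\}$'' in the class as you suggest --- and by Lemma \ref{le-minimalhdimension} every density current $S$ equals $\pi^*S'$ for a closed positive $(p+1,p+1)$-current $S'$ on $X$ lying in the class $\{P\}\wedge\{T\}$. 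Theorem \ref{the-phanbucuarestricteddenstyvanonpluri} now gives $S'-\langle P\dot\wedge T\rangle\ge 0$ of class $\gamma$, which proves pseudoeffectivity directly; your monotonicity route via a minimal-singularity $P'$ does not yield $\{P\}\wedge\{T\}\ge\{\langle P'\dot\wedge T\rangle\}$ without a nefness assumption on $\{P\}$, which is not made here.

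This global minimality also dissolves your worries about fiber classes and additivity. Since $S'$ is a $(p+1,p+1)$-current on $X$, any $V$ with $\nu(T,V)>0$ has $\dim V\le n-p$; but $\dim V=n-p$ would force $T$ to have mass on $V\subset I_P$, contradicting the hypothesis. Thus every contributing $V$ has $\dim V=n-p-1$ exactly. Corollary \ref{cor-sosanhlelong} gives $\nu(S',V)\ge\nu(P,V)\nu(T,V)$, and since $\dim V$ equals the bi-dimension of $S'$, Siu's decomposition yields $S'\ge\sum_V\nu(P,V)\nu(T,V)\,[V]$, the sum being over distinct irreducible sets of the \emph{same} dimension --- so the contributions add with no maximality or disjointness argument required. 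No ``$\omega_{\mathrm{transverse}}$'', no push-forward from $\overline E$, no restriction to generic points of $V$ are needed.
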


Here $\nu(P,V)$ and $\nu(T,V)$ denote the generic Lelong numbers of $P$ and $T$ along $V$, respectively.

\proof  Let $\cali{V}$ be the set of irreducible analytic subsets $V$ of dimension at least $n-p-1$ in $X$ such that $\nu(T,V)>0$ and $\nu(P,V)>0$. We note that in (\ref{ine-uocluonggammaVPT}), it is enough to consider  $V \in \cali{V}$. We will see below that $\cali{V}$ has at most countable elements.     

Observe that if $\nu(P,x)>0$, then $x \in I_P$. Hence, by hypothesis, the trace measure of $T$ has no mass on the set $\{x \in X: \nu(P,x)>0\}$. This allows us to apply Proposition \ref{pro-uocluongmassonsmallsetdensity} to $P$ and $T$ to obtain that the density h-dimension of $P$ and $T$ is minimal. Using this and Lemma \ref{le-classDSproduct} gives 
\begin{align}\label{eq-densityclassPT}
\kappa(P,T)= \pi^* (\{P\} \wedge \{T\}),
\end{align}
where $\pi$ is the natural projection from the normal bundle of the diagonal $\Delta$ of $X^2$ to $\Delta$.  

Let $S$ be a density current associated to $P$ and $T$. Since the h-dimension of $S$ is minimal, using Lemma \ref{le-minimalhdimension}, we get that there exists a current $S'$ on $X$ such that $S= \pi^* S'$ (recall $\Delta$ is identified with $X$). Since the relative non-pluripolar product is dominated by density currents (Theorem \ref{the-phanbucuarestricteddenstyvanonpluri}), the current $S' - \langle P \dot{\wedge} T \rangle$ is closed and positive. Moreover, by (\ref{eq-densityclassPT}), the cohomology class of the last current is equal to $\gamma$. It follows that $\gamma$ is pseudoeffective. 

It remains to prove (\ref{ine-uocluonggammaVPT}).  Let $V \in \cali{V}$. By definition, the generic Lelong number of $T$ along $V$ is positive. Since $T$ is of bi-degree $(p,p)$, the dimension of $V$ must be at most $n-p$. Hence, we have two possibilities: either $\dim V= n-p-1$ or $\dim V= n-p$. The latter case cannot happen because $T$ would have mass on $V$ which is contained in $I_P$ (for $\nu(P,V)>0$), this contradicts the hypothesis that $T$ has no mass on $I_P$.  Hence, for every $V \in \cali{V}$, we have $\dim V= n-p-1$. We also deduce that for $V,V' \in \cali{V}$, then $V \not \subset V'$. 
  
Applying Corollary \ref{cor-sosanhlelong} to $P,T$ and generic $x \in V$ gives 
$$\nu(S,V) \ge \nu(P,V) \nu(T,V).$$
This combined with the fact that $\dim V= n-p-1$ implies  $S \ge \nu(P,V) \nu(T,V) \, [V]$.
We deduce that 
$$S \ge \sum_{V \in\cali{V}}\nu(P,V) \nu(T,V) \, [V].$$
The desired assertion follows. The proof is finished.
\endproof

\section{Reversed Alexandrov-Fenchel inequalities} \label{sec-AF}

We first recall an integration by parts formula for relative non-pluripolar products from \cite{Viet-convexity-weightedclass} generalizing those given in \cite{BEGZ,Lu-cap-compare,Xia}.     

Let $X$ be a compact K\"ahler manifold. Recall that a \emph{dsh} function on $X$ is the difference of two quasi-plurisubharmonic (quasi-psh for short) functions on $X$ (see \cite{DS_tm}). These functions are well-defined outside pluripolar sets. Let $v$ be a dsh function on $X$. The last function is said to be \emph{bounded} in $X$ if there exists a constant $C$ such that $|v| \le C$ on $X$ (outside certain pluripolar set).   

Let $T$ be a closed positive current on $X$. We say that $v$ is \emph{$T$-admissible} if  there exist  quasi-psh functions $\varphi_1, \varphi_2$ on $X$ such that $v= \varphi_1- \varphi_2$  and $T$ has no mass on $\{\varphi_j=-\infty\}$ 
for $j=1,2$. In particular, if $T$ has no mass on pluripolar sets, then every dsh function is $T$-admissible.  

Assume now that $v$ is  \emph{bounded} $T$-admissible.    Let $\varphi_{1}, \varphi_{2}$ be quasi-psh functions such that $v= \varphi_{1}- \varphi_{2}$ and $T$ has no mass on $\{\varphi_{j}=-\infty\}$ for $j=1,2$. Let 
$$\varphi_{j,k}:= \max\{\varphi_{j}, -k \}$$
for every $j=1,2$ and $k \in \N$. Put $v_k:= \varphi_{1,k}- \varphi_{2,k}$ and
$$Q_k:= d v_k \wedge \dc v_k \wedge T=\ddc v_k^2  \wedge T - v_k\ddc v_k \wedge T.$$
By the plurifine locality with respect to $T$ (\cite[Theorem 2.9]{Viet-generalized-nonpluri}) applied to the right-hand side of the last equality, we have 
\begin{align}\label{eq-localplurifineddc}
\bold{1}_{\cap_{j=1}^2 \{\varphi_{j}> -k\}} Q_k =\bold{1}_{\cap_{j=1}^2\{\varphi_{j}> -k\}} Q_{k'}
\end{align}
for every $k'\ge k$. By \cite[Lemma 2.5]{Viet-convexity-weightedclass},  the mass of $Q_k$ on $X$ is bounded uniformly in $k$.  This combined with  (\ref{eq-localplurifineddc}) implies that there exists a positive current $Q$ on $X$ such that for every bounded Borel form $\Phi$ with compact support on $X$ such that 
$$\langle Q,\Phi \rangle  = \lim_{k\to \infty} \langle Q_k, \Phi\rangle.$$
We define $\langle d v \wedge \dc v \dot{\wedge} T \rangle$ to be the current $Q$.  This agrees with the classical definition if $v$ is the difference of two  bounded quasi-psh functions. One can check  that this definition is independent of the choice of $\varphi_1, \varphi_2$.

Let $w$ be another bounded $T$-admissible dsh function.  If $T$ is of bi-degree $(n-1,n-1)$, we can also define the current $\langle  dv \wedge \dc w \dot{\wedge} T\rangle$ by a similar procedure as above.  We put
$$\langle \ddc v \dot{\wedge} T \rangle:= \langle \ddc \varphi_1 \dot{\wedge} T\rangle- \langle \ddc \varphi_2 \dot{\wedge} T\rangle$$
which is independent of the choice of $\varphi_1,\varphi_2$. By $T$-admissibility, we have 
$$\langle  \ddc (v+w)  \dot{\wedge} T\rangle=\langle  \ddc v\dot{\wedge} T\rangle+\langle  \ddc w   \dot{\wedge} T\rangle.$$ 
Here is an integration by parts formula for relative non-pluripolar products.

\begin{theorem} \label{th-integra} (\cite[Theorem 2.6]{Viet-convexity-weightedclass})  Let  $T$ a closed positive current of bi-degree $(n-1,n-1)$ on $X$. Let $v$ and $w$ be bounded $T$-admissible dsh functions on $X$. Then,  we have  
\begin{align}\label{eq-intebyparts}
\int_X w \langle  \ddc v \dot{\wedge} T\rangle=\int_X v \langle \ddc w\dot{\wedge} T\rangle=- \int_X \langle dw \wedge \dc v \dot{\wedge} T \rangle.
\end{align}
\end{theorem}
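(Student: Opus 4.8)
The plan is to prove the identity by truncation: one reduces to the classical Bedford--Taylor integration by parts for bounded (plurifine) potentials, and then one controls the mass that the relative non-pluripolar product loses at infinity. Throughout I would use the plurifine locality with respect to $T$ (\cite[Theorem 2.9]{Viet-generalized-nonpluri}), the fact that relative products with $T$ inherit from $T$ the property of not charging pluripolar sets, and the uniform mass bound \cite[Lemma 2.5]{Viet-convexity-weightedclass}.

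First I would fix decompositions $v=\varphi_1-\varphi_2$, $w=\psi_1-\psi_2$ into quasi-psh functions with $T$ having no mass on $\{\varphi_j=-\infty\}$ or $\{\psi_j=-\infty\}$, choose $C$ with $|v|,|w|\le C$, and set $\varphi_{j,k}:=\max\{\varphi_j,-k\}$, $v_k:=\varphi_{1,k}-\varphi_{2,k}$ (and $\psi_{j,k},w_k$ analogously), together with $G^v_k:=\{\varphi_1>-k\}\cap\{\varphi_2>-k\}$, $G^w_k$ defined likewise, and $H_k:=G^v_k\cap G^w_k$. The functions $v_k,w_k$ are bounded by $C$, coincide with $v,w$ on $H_k$, and — this is the one place boundedness of $v$ enters — one has $\{\varphi_1<-k\}\subset\{\varphi_2<-k+C\}$ and symmetrically, so $\varphi_1$ and $\varphi_2$ have the same unbounded locus. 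For the bounded functions $v_k,w_k$ the relative products coincide with the usual ones, and the classical integration by parts gives, for every $k$,
$$\int_X w_k\langle\ddc v_k\dot{\wedge} T\rangle=\int_X v_k\langle\ddc w_k\dot{\wedge} T\rangle=-\int_X\langle dw_k\wedge\dc v_k\dot{\wedge} T\rangle ;$$
this is obtained by regularizing $\varphi_{j,k},\psi_{j,k}$ from above by smooth quasi-psh functions, using Stokes' theorem, $dT=0$, and the symmetry of $dv\wedge\dc w$ in $v$ and $w$, exactly as in \cite{BEGZ}. By the very definition of $\langle dw\wedge\dc v\dot{\wedge} T\rangle$ through the (untruncated) currents $Q_k$, together with polarization, $\int_X\langle dw_k\wedge\dc v_k\dot{\wedge} T\rangle\to\int_X\langle dw\wedge\dc v\dot{\wedge} T\rangle$. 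Hence everything reduces to proving $\int_X w_k\langle\ddc v_k\dot{\wedge} T\rangle\to\int_X w\langle\ddc v\dot{\wedge} T\rangle$ and the same with $v,w$ interchanged.

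For this I would split, using plurifine locality with respect to $T$ (which yields $\mathbf{1}_{G^v_k}\langle\ddc v_k\dot{\wedge} T\rangle=\mathbf{1}_{G^v_k}\langle\ddc v\dot{\wedge} T\rangle$) and $w_k=w$ on $H_k$,
$$\int_X w_k\langle\ddc v_k\dot{\wedge} T\rangle=\int_X\mathbf{1}_{H_k}\,w\,\langle\ddc v\dot{\wedge} T\rangle+A_k,\qquad A_k:=\int_X\mathbf{1}_{X\setminus H_k}\,w_k\,\langle\ddc v_k\dot{\wedge} T\rangle .$$
Since $|w|\le C$, $\langle\ddc v\dot{\wedge} T\rangle$ has finite total variation, and $\mathbf{1}_{H_k}$ increases to $1$ off a pluripolar set that $\langle\ddc v\dot{\wedge} T\rangle$ does not charge (because $T$ does not charge the poles of $\psi_j$), dominated convergence gives $\int_X\mathbf{1}_{H_k}w\langle\ddc v\dot{\wedge} T\rangle\to\int_X w\langle\ddc v\dot{\wedge} T\rangle$, so it remains to show $A_k\to0$. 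I would write $X\setminus H_k=(G^v_k\setminus G^w_k)\sqcup(X\setminus G^v_k)$. On $G^v_k\setminus G^w_k\subset G^v_k$ one still has $\langle\ddc v_k\dot{\wedge} T\rangle=\langle\ddc v\dot{\wedge} T\rangle$, and this set lies in $X\setminus G^w_k$, which shrinks to a pluripolar set not charged by $\langle\ddc v\dot{\wedge} T\rangle$; so this part of $A_k$ vanishes. On $X\setminus G^v_k=\{\varphi_1\le-k\}\cup\{\varphi_2\le-k\}$ I would work on the open set $\{\varphi_1<-k\}$, where $\varphi_{1,k}$ is locally constant, so $\langle\ddc v_k\dot{\wedge} T\rangle=-\langle\ddc\varphi_{2,k}\dot{\wedge} T\rangle$ there; this current has no mass on $\{\varphi_2<-k\}$ and, by plurifine locality, agrees with $\langle(\ddc\varphi_2+\theta_2)\dot{\wedge} T\rangle-\theta_2\wedge T$ on $\{\varphi_2>-k\}$, so the inclusion $\{\varphi_1<-k\}\subset\{\varphi_2<-k+C\}$ bounds its total variation there by the mass of $\langle(\ddc\varphi_2+\theta_2)\dot{\wedge} T\rangle+|\theta_2|\wedge T$ over $\{\varphi_2<-k+C\}$, which tends to $0$ since these finite measures do not charge $\{\varphi_2=-\infty\}$. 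The symmetric contribution from $\{\varphi_2\le-k\}$ is identical, and the level sets $\{\varphi_j=-k\}$ — carrying positive $T$-trace mass for at most countably many $k$ — are avoided by using generic real truncation levels. This gives $A_k\to0$; the $v\leftrightarrow w$ version follows in the same way, and combining the three limits yields the theorem.

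The main obstacle is this last step. In contrast to the bounded-potential case, the approximants $\langle\ddc v_k\dot{\wedge} T\rangle$ do \emph{not} converge weakly to $\langle\ddc v\dot{\wedge} T\rangle$: genuine mass escapes to the polar locus, so one cannot pass to the limit naively under the integral. The point of the estimate is that this escaping mass, once weighted by the uniformly bounded functions $w_k$, still disappears; the two features that make this work are the boundedness of $v$, which forces the poles of $\varphi_1$ and $\varphi_2$ to coincide and thereby makes $\langle\ddc v_k\dot{\wedge} T\rangle$ near the polar set controlled by a single honest non-pluripolar mass that tends to $0$, and the hypothesis that $T$ — hence every relative product with $T$ — puts no mass on the pluripolar sets in play.
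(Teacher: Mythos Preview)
The paper does not prove this statement; Theorem~\ref{th-integra} is quoted verbatim from \cite[Theorem~2.6]{Viet-convexity-weightedclass} and used as a black box in the proof of Proposition~\ref{pro-theoremmain}. So there is no in-paper argument to compare against. Your strategy---truncate, use classical Bedford--Taylor integration by parts for the bounded approximants, then exploit the boundedness of $v$ (which forces $\{\varphi_1<-k\}\subset\{\varphi_2<-k+C\}$ and vice versa) to show the error on $X\setminus H_k$ dies---is the natural one and is, in outline, exactly how results of this type are proved in the cited literature.

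Two points where your write-up is looser than it needs to be. First, the sentence ``by the very definition \dots\ together with polarization'' is slightly misaimed: as the paper indicates just before the theorem, the mixed object $\langle dv\wedge\dc w\dot{\wedge} T\rangle$ is defined \emph{directly} by the same truncation procedure (via $2\,dv_k\wedge\dc w_k\wedge T=\ddc(v_kw_k)\wedge T - v_k\,\ddc w_k\wedge T - w_k\,\ddc v_k\wedge T$ and plurifine locality), so the convergence of $\int_X dw_k\wedge\dc v_k\wedge T$ is immediate from the definition rather than from polarization. Second, and more substantively, your handling of the level sets is not quite complete: saying that $\{\varphi_j=-k\}$ has zero $T$-trace mass for generic $k$ does \emph{not} by itself imply that $(\ddc\varphi_{j,k}+\theta_j)\wedge T$ puts no mass there---indeed this measure is typically concentrated precisely on that level set, and it depends on $k$, so the ``countably many bad $k$'' trick does not apply directly. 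What actually makes the argument close is that the potential mass sitting on $\{\varphi_1=-k\}$ for $(\ddc\varphi_{1,k}+\theta_1)\wedge T$ and on $\{\varphi_2=-k\}$ for $(\ddc\varphi_{2,k}+\theta_2)\wedge T$ tend to the \emph{same} limit (the common ``mass defect'' of $\varphi_1$ and $\varphi_2$, which coincide because $|\varphi_1-\varphi_2|\le C$ forces equal singularity type, via the monotonicity Theorem~\ref{th-monoticity} applied in both directions). Hence in the difference $\ddc v_k\wedge T$ these contributions cancel in the limit. Once this is made explicit, your proof goes through.
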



Let $\theta$ be  a closed smooth $(1,1)$-form on $X$. Given a $\theta$-psh function $u$, we use the usual notation that 
$$\theta_u:= \ddc u+ \theta.$$
Let $T$ be a closed positive current of bi-degree $(n-2,n-2)$ on $X$. Let $u, v, \varphi$ be $\theta$-psh functions such that $u\le \varphi$ and $v \le \varphi$.  

The following can be regarded as an inequality of reversed Alexandrov-Fenchel type. Several related estimates for mixed Monge-Amp\`ere operators were obtained in the local setting; see \cite{Demailly-Hiep,Kim-Rashkovskii}.

\begin{proposition}\label{pro-theoremmain} Assume that 
\begin{align}\label{eq-hypo-fullmasthetau}
\int_X \langle \theta_\varphi^2\dot{\wedge} T\rangle = \int_X \langle \theta_u \wedge \theta_\varphi \dot{\wedge} T\rangle,
\end{align}
and $T$ has no mass on $\{u= -\infty\}\cup\{v= - \infty\}$. Then, we have 
\begin{align}\label{ine-reversedAF}
\int_X \big(\langle \theta_\varphi \wedge \theta_v \dot{\wedge} T\rangle - \langle \theta_u \wedge \theta_v \dot{\wedge} T\rangle\big)  \le \bigg( \int_X \big(\langle \theta_\varphi^2\dot{\wedge} T\rangle -  \langle \theta_u^2 \dot{\wedge} T\rangle\big) \bigg)^{1/2} \bigg(  \int_X \big(\langle \theta_\varphi^2 \dot{\wedge} T\rangle -  \langle \theta_v^2\dot{\wedge} T\rangle\big)  \bigg)^{1/2}
\end{align}
\end{proposition}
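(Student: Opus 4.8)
The plan is to reduce the inequality (\ref{ine-reversedAF}) to a Cauchy--Schwarz estimate applied to the bilinear form $(a,b) \mapsto \int_X \langle da \wedge \dc b \dot{\wedge} T\rangle$, exactly as in the classical proof of the Hodge-index / Alexandrov--Fenchel inequality for currents. First I would set $a := \varphi - u$ and $b := \varphi - v$; these are nonnegative $T$-admissible dsh functions (bounded after a standard truncation argument, which is the only place the no-mass hypothesis on $\{u=-\infty\}\cup\{v=-\infty\}$ and on $\{\varphi = -\infty\}$ enters, together with the fact that $\varphi,u,v$ are $\theta$-psh so their local potentials differ by bounded dsh functions on small charts — one works with $\max\{u,-k\}$ etc. and passes to the limit using the mass bounds from \cite{Viet-convexity-weightedclass} as in the definition of $\langle dv\wedge\dc w\dot\wedge T\rangle$ recalled above). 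The key algebraic identity is $\theta_\varphi - \theta_u = \ddc a$ and $\theta_\varphi - \theta_v = \ddc b$, so that for any two of the symbols among $\{\varphi,u,v\}$ the differences of the mixed masses telescope into integrals of $\ddc a$ or $\ddc b$ against the remaining factor.

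Concretely, I would rewrite each of the three parenthesised quantities in (\ref{ine-reversedAF}) using Theorem \ref{th-integra} (integration by parts for relative non-pluripolar products, applied with the bi-degree $(n-1,n-1)$ current obtained by wedging $T$ with one more factor of the form $\theta_\bullet$). For instance
\begin{align*}
\int_X \big(\langle \theta_\varphi^2\dot{\wedge} T\rangle - \langle \theta_u^2\dot{\wedge} T\rangle\big)
&= \int_X \langle \ddc a \wedge (\theta_\varphi + \theta_u) \dot{\wedge} T\rangle \\
&= \int_X \langle da \wedge \dc a \dot{\wedge} (2T')\rangle \ \text{-type expression},
\end{align*}
after using $\theta_\varphi + \theta_u = 2\theta_\varphi - \ddc a$, the integration-by-parts formula, and the hypothesis (\ref{eq-hypo-fullmasthetau}) to kill the cross term $\int_X \langle \ddc a \wedge \theta_\varphi \dot{\wedge} T\rangle$ (which equals $\int_X (\langle\theta_\varphi^2\dot\wedge T\rangle - \langle\theta_u\wedge\theta_\varphi\dot\wedge T\rangle) = 0$). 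The same manipulation applied with $b$ in place of $a$ handles the third factor, and applied to the left-hand side $\int_X(\langle\theta_\varphi\wedge\theta_v\dot\wedge T\rangle - \langle\theta_u\wedge\theta_v\dot\wedge T\rangle) = \int_X \langle \ddc a \wedge \theta_v \dot\wedge T\rangle = \int_X \langle da\wedge \dc b\dot\wedge T\rangle$ (again using (\ref{eq-hypo-fullmasthetau}) to replace $\theta_v$ by $\theta_v - \theta_\varphi = -\ddc b$ up to a vanishing term, then integrating by parts). Thus (\ref{ine-reversedAF}) becomes precisely
\[
\int_X \langle da \wedge \dc b \dot{\wedge} T\rangle \ \le\ \Big(\int_X \langle da\wedge \dc a\dot{\wedge} T\rangle\Big)^{1/2}\Big(\int_X \langle db\wedge \dc b\dot{\wedge} T\rangle\Big)^{1/2},
\]
which is the Cauchy--Schwarz inequality for the positive semidefinite pairing $(a,b)\mapsto \int_X \langle da\wedge\dc b\dot\wedge T\rangle$ — positivity being visible at the truncated level since $\langle dv_k\wedge\dc v_k\dot\wedge T\rangle = dv_k\wedge\dc v_k\wedge T \ge 0$, and then preserved in the limit.

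The main obstacle I anticipate is not the Cauchy--Schwarz step itself but the bookkeeping needed to \emph{justify} the integration by parts and the sign manipulations in the relative, unbounded setting: Theorem \ref{th-integra} is stated for bounded $T$-admissible dsh functions, so one must first truncate $\varphi, u, v$ at level $-k$, carry out all identities for the bounded functions $a_k = \max\{\varphi,-k\} - \max\{u,-k\}$ and $b_k = \max\{\varphi,-k\} - \max\{v,-k\}$ against $T$, and only then pass to the limit $k\to\infty$. The delicate point is that the limit of $\langle\theta_{\varphi,k}\wedge\theta_{u,k}\dot\wedge T\rangle$-type masses must be identified with the corresponding relative non-pluripolar products $\langle\theta_\varphi\wedge\theta_u\dot\wedge T\rangle$ — this uses the plurifine locality and strong quasi-continuity (\cite[Theorems 2.4, 2.9]{Viet-generalized-nonpluri}) together with the no-mass hypotheses — and that the hypothesis (\ref{eq-hypo-fullmasthetau}), which is an equality of \emph{limit} masses, can be inserted at the right moment. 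I would also need to check that the cross-term cancellation is robust: the identity $\int_X\langle\ddc a\wedge\theta_\varphi\dot\wedge T\rangle = 0$ should be read as the limit of $\int_X\langle\ddc a_k\wedge\theta_{\varphi}\dot\wedge T\rangle$, and the convergence of $\int_X\langle\theta_{u,k}\wedge\theta_\varphi\dot\wedge T\rangle \to \int_X\langle\theta_u\wedge\theta_\varphi\dot\wedge T\rangle$ is exactly where one exploits $u\le\varphi$ and the full-mass-type hypothesis (\ref{eq-hypo-fullmasthetau}). Once this limiting scheme is set up carefully, the rest is the routine expansion above.
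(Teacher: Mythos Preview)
Your overall strategy—truncate, integrate by parts, apply Cauchy--Schwarz to a positive bilinear form, and use hypothesis (\ref{eq-hypo-fullmasthetau}) to kill a cross term—is exactly the architecture of the paper's proof. However, the specific reduction you propose does not go through, and the gap is more than bookkeeping.

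First, a dimensional inconsistency: $T$ has bidegree $(n-2,n-2)$, so $\langle da\wedge\dc b\dot\wedge T\rangle$ is an $(n-1,n-1)$-current and your final displayed Cauchy--Schwarz inequality is not even well-typed. You note that one should wedge $T$ with some $\theta_\bullet$ to reach bidegree $(n-1,n-1)$, but then the three terms in (\ref{ine-reversedAF}) naturally sit over \emph{different} background currents, and Cauchy--Schwarz requires a single positive form.

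Second, and more seriously: with your truncations $a_k=\varphi_k-u_k$, $b_k=\varphi_k-v_k$ (bounded dsh), the identities you want to pass to the limit are all \emph{trivially zero} at level $k$. Indeed $\int_X\theta_{\varphi_k}\wedge\theta_{v_k}\wedge T=\int_X\theta_{u_k}\wedge\theta_{v_k}\wedge T$, since both equal the cohomological mass (Stokes; $\varphi_k,u_k,v_k$ are bounded and $T$ is closed). The mass defect in (\ref{ine-reversedAF}) comes entirely from the indicator cutoff $\bold 1_{\{u>-k\}\cap\{v>-k\}}$ in the definition of the non-pluripolar product, and that cutoff is precisely what destroys the Stokes cancellation. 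You cannot both keep the cutoff (to see the defect) and drop it (to apply Theorem~\ref{th-integra}).

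The paper resolves this by replacing the indicator with a \emph{dsh} weight
\[
\psi_k:=k^{-1}\max\{u+v-2\varphi,-k\}+1,
\]
which tends to $1$ on $\{u>-\infty\}\cap\{v>-\infty\}$ and to $0$ elsewhere, but—crucially—satisfies $\ddc\psi_k+k^{-1}\eta\ge0$ with $\eta=2\theta_\varphi$. One writes $A=\lim_k B_k$ with $B_k=-\int_X\psi_k\langle\ddc u_k\wedge(\ddc v_k+\theta_\varphi)\dot\wedge T\rangle$ (here $u_k=\max\{u,\varphi-k\}-(\varphi-k)$, etc.), uses Theorem~\ref{th-integra} to move the $\ddc$ onto $\psi_k$, and applies Cauchy--Schwarz against the \emph{positive} $(n-1,n-1)$-current $(\ddc\psi_k+k^{-1}\eta)\wedge T$. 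The hypothesis (\ref{eq-hypo-fullmasthetau}) is used exactly once, to show that $I_2=-\int_X\psi_k\langle\ddc u_k\wedge\theta_\varphi\dot\wedge T\rangle\to 0$; it is \emph{not} used in computing the two factors on the right of (\ref{ine-reversedAF}), contrary to your sketch. The weight $\psi_k$ is the missing idea in your proposal.
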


Notice that by the monotonicity of relative non-pluripolar products, the left-hand side of (\ref{ine-reversedAF}) is non-negative.

\proof    By hypothesis, one has
\begin{align}\label{eq-massofTonuinftry}
\bold{1}_{\{u= -\infty\} \cup \{v= -\infty\}}T=0.
\end{align}
Note here that 
$$\{\varphi = -\infty\} \subset \{u= -\infty\}\cap \{v= -\infty\}.$$
 Let 
$$u_{k}:= \max\{u, \varphi-k\}- (\varphi -k)$$
and
 $$\psi_k:= k^{-1}\max\{u+ v-  2\varphi, - k \}+1=k^{-1}\max\{u+ v,  2\varphi - k \}- k^{-1} 2 \varphi +1$$
Define $v_k$ similarly. We have $\psi_k= 0$ on $ \{u \le \varphi -k\} \cup \{v \le \varphi -k\}$.  Note also that $0 \le u_{k}, v_k \le k$ for $j=1,2$ (hence $u_k,v_k,\psi_k$ are bounded $T$-admissible dsh functions by (\ref{eq-massofTonuinftry})) and 
$$\ddc \psi_k + k^{-1}\eta\ge 0,$$
where $\eta:=  2 \theta_{\varphi}$.   We can check that 
\begin{align*}
\langle \theta_{u} \wedge \theta_{v}  \dot{\wedge} T \rangle& =\lim_{k \to \infty} \psi_k \big\langle (\ddc u_{k}+\theta_{\varphi})\wedge (\ddc v_{k}+\theta_{\varphi}) \dot{\wedge} T \big\rangle.
\end{align*}
Put 
$$B_k:= \int_X \psi_k \big\langle \theta_{\varphi}\wedge (\ddc v_{k}+\theta_{\varphi}) \dot{\wedge} T \big\rangle - \int_X \psi_k \big\langle (\ddc u_{k}+\theta_{\varphi})\wedge (\ddc v_{k}+\theta_{\varphi}) \dot{\wedge} T \big\rangle$$
and
$$A:= \int_X \big(\langle \theta_\varphi \wedge \theta_v \dot{\wedge} T\rangle - \langle \theta_u \wedge \theta_v \dot{\wedge} T\rangle\big)$$
By (\ref{eq-massofTonuinftry}) and Proposition \ref{pro-sublinearnonpluripolar} $(ii)$, we have
$$A= \lim_{k \to \infty} B_k.$$
Observe
\begin{align*}
B_k &=  - \int_X \psi_k \langle \ddc u_k  \wedge (\ddc v_{k}+\theta_{\varphi}) \dot{\wedge} T \big\rangle\\
 &=   - \int_X \psi_k \langle \ddc u_k  \wedge \ddc v_{k} \dot{\wedge} T \big\rangle   - \int_X \psi_k \langle \ddc u_k  \wedge \theta_{\varphi} \dot{\wedge} T \big\rangle.
\end{align*}
Denote by $I_1, I_2$ the first and  second term in the right-hand side of the last equality. Using (\ref{eq-massofTonuinftry}) gives
\begin{align*}
\lim_{k \to \infty} I_2 &= \lim_{k \to \infty} \bigg(- \int_X \psi_k \langle (\ddc u_k+ \theta_\varphi)  \wedge \theta_{\varphi} \dot{\wedge} T \big\rangle+ \int_X \psi_k \langle \theta_{\varphi}^2 \dot{\wedge} T \big\rangle\bigg)\\
&=  -  \int_X\langle   \theta_{u}\wedge \theta_{\varphi} \dot{\wedge} T \big\rangle+\int_X \langle \theta_{\varphi}^2 \dot{\wedge} T \big\rangle=0
 \end{align*}
 by (\ref{eq-hypo-fullmasthetau}). 
 Thus we get 
 \begin{align}\label{ine-buocdautienABD}
B_k  =I_1+ o_{k \to \infty}(1).
\end{align}
 Theorem \ref{th-integra} applied to the formula defining $I_1$ gives 
\begin{align}\label{eq-biendoiI1}
- I_1  &= \int_X  u_{k} \big\langle   \ddc \psi_k \wedge \ddc v_{k} \dot{\wedge} T \big\rangle\\
\nonumber
&=  -\int_X \big \langle d u_{k} \wedge \dc v_{k} \wedge   \ddc \psi_k  \dot{\wedge} T \big\rangle\\
\nonumber
&= -\int_X  \big \langle d u_k \wedge \dc v_k \wedge   (\ddc \psi_k+k^{-1}\eta)  \dot{\wedge} T \big\rangle + k^{-1}\int_X \big \langle d u_k \wedge \dc v_k \wedge   \eta  \dot{\wedge} T \big\rangle.
\end{align}
Denote by $J_1,J_2$ the first and second term in the right-hand side of the last equality. We treat $J_2$. By integration by parts (Theorem \ref{th-integra}), we obtain
\begin{align} \label{eq-biendoiJ2}
J_2 &= - k^{-1} \int_X   u_k \langle \ddc v_k \wedge   \eta  \dot{\wedge} T \rangle
&=  - k^{-1} \int_X   u_k \langle (\ddc v_k+\theta_{\varphi}) \wedge   \eta  \dot{\wedge} T \rangle+ o_{k \to \infty}(1)
\end{align}
because of (\ref{eq-massofTonuinftry}) and the fact that $u_k/k$ converges to $0$ on $\{u>- \infty\}$ and to  $-1$ otherwise.  Using this and noticing that $u_k\le 0$,  we infer
\begin{align} \label{ine-uocluongJ2}
\liminf_{k\to \infty} J_2 \ge  0.
\end{align}
On the other hand, using the Cauchy-Schwarz inequality, we obtain
\begin{align}\label{ine-uocliongJ1}
J_1^2  \le  \int_X \big \langle d u_k \wedge \dc u_k \wedge   (\ddc \psi_k+k^{-1}\eta)  \dot{\wedge} T \big\rangle   \int_X\big \langle d v_k \wedge \dc v_k \wedge   (\ddc \psi_k+k^{-1}\eta)  \dot{\wedge} T \big\rangle.
\end{align}
Denote by $J_{11}, J_{12}$ the first and second term in the right-hand side of the last inequality. Put 
$$J'_{11}:=k^{-1} \int_X\big \langle d u_k \wedge \dc u_k \wedge  \eta  \dot{\wedge} T \big\rangle.$$
Using integration by parts and arguing as in (\ref{eq-biendoiJ2}), we obtain 
\begin{align} \label{ine-uocluongJphay11sua}
J'_{11} &=-k^{-1} \int_X  u_k \big \langle \ddc u_k \wedge  \eta  \dot{\wedge} T \big\rangle\\
\nonumber
&= - k^{-1} \int_X  u_k \big \langle (\ddc u_k+ \theta_\varphi) \wedge  \eta  \dot{\wedge} T \big\rangle+ o_{k \to\infty}(1)\\
\nonumber
&= \int_X \big \langle (\ddc u_k+ \theta_\varphi) \wedge  \eta  \dot{\wedge} T \big\rangle - \int_X (u_k/k +1)  \big \langle (\ddc u_k+ \theta_\varphi) \wedge  \eta  \dot{\wedge} T \big\rangle+ o_{k \to\infty}(1) \\
\nonumber
&= \int_X \big \langle \theta_\varphi \wedge  \eta  \dot{\wedge} T \big\rangle - \int_X (u_k/k +1)  \big \langle \theta_u \wedge  \eta  \dot{\wedge} T \big\rangle+ o_{k \to\infty}(1) 
\end{align}
because $u_k/k+1=0$ on $\{u \le \varphi -k\}$. Letting  $k \to \infty$ in (\ref{ine-uocluongJphay11sua}) gives 
\begin{align}\label{ine-uocluongJphay11}
\lim_{k\to \infty} J'_{11}= \int_X   \big \langle \theta_{\varphi} \wedge   \eta  \dot{\wedge} T \big\rangle-\int_X   \big \langle \theta_{u} \wedge   \eta  \dot{\wedge} T \big\rangle=2 \int_X   \big \langle \theta_{\varphi}^2  \dot{\wedge} T \big\rangle- 2\int_X   \big \langle \theta_{u} \wedge   \theta_\varphi  \dot{\wedge} T \big\rangle.
\end{align}
Using Theorem \ref{th-integra} again and arguing as in (\ref{ine-uocluongJphay11sua}), we have 
\begin{align*}
J_{11} &=\int_X \big \langle d u_k \wedge \dc u_k \wedge  \ddc \psi_k \dot{\wedge} T \big\rangle+ J'_{11}\\
&= -\int_X \psi_k \big \langle \ddc u_k \wedge \ddc u_k  \dot{\wedge} T \big\rangle+ J'_{11}\\
&= - \int_X \psi_k \big \langle  (\ddc u_k+\theta_\varphi)^2  \dot{\wedge} T \big\rangle + 2 \int_X \psi_k \big \langle \ddc u_k \wedge \theta_{\varphi} \dot{\wedge} T \big\rangle + \int_X \psi_k \big \langle \theta_{\varphi}^2 \dot{\wedge} T \big\rangle   + J'_{11}\\
&= \int_X \big\langle \theta_\varphi^2  \dot{\wedge} T \big\rangle -\int_X \big\langle \theta_u^2  \dot{\wedge} T \big\rangle+ 2 \int_X \big\langle \theta_u \wedge \theta_\varphi  \dot{\wedge} T \big\rangle- 2\int_X \big\langle \theta_\varphi^2  \dot{\wedge} T \big\rangle + o_{k \to\infty}(1)+ J'_{11}
\end{align*}
This combined with (\ref{ine-uocluongJphay11}) yields that 
$$\limsup_{k \to \infty} J_{11} \le  \int_X \big \langle \theta^2_{\varphi} \dot{\wedge} T \big\rangle- \int_X \big \langle \theta_{u}^2 \dot{\wedge} T \big\rangle.$$
By similar computations, we also get
$$\limsup_{k \to \infty} J_{12} \le  \int_X \big \langle \theta^2_{\varphi} \dot{\wedge} T \big\rangle- \int_X \big \langle \theta_{v}^2 \dot{\wedge} T \big\rangle.$$
 This together with (\ref{eq-biendoiI1}) and (\ref{ine-buocdautienABD}) gives 
$$A=\limsup_{k \to \infty} B_k=\limsup_{k \to \infty} I_1= -  \liminf_{k \to \infty} -I_1 \le C,$$
 where $C$ is the right-hand side of (\ref{ine-reversedAF}). This finishes the proof.
\endproof

\begin{corollary}\label{cor-theoremmain} Let $T$ be closed positive current of bi-dimension $(m,m)$ on $X$ and let  $\theta$ be  a closed smooth $(1,1)$-form on $X$. Let $u, v, \varphi$ be $\theta$-psh functions such that $u,v \le \varphi$, and  $T$ has no mass on $\{u= -\infty\}$ and $\{v= - \infty\}$. Let $M$ be a positive constant greater than $\int_X \theta_\varphi^m \dot{\wedge} T$.  Then, there exists a constant $C_M$ depending only on $M,m,n$ such that 
\begin{align}\label{ine-thetauvtachldinhluong}
\bigg| \int_X \langle \theta_\varphi^{m-l} \wedge \theta_v^l  \dot{\wedge} T\rangle - \int_X \langle \theta_u^{m-l} \wedge \theta_v^l \dot{\wedge} T\rangle \bigg | \le C_M \bigg(\int_X \langle \theta_\varphi^m\dot{\wedge} T\rangle - \int_X \langle \theta_u^m \dot{\wedge} T\rangle\bigg)^{2^{-l}}.
\end{align}
In particular, if
$$\int_X \langle \theta_\varphi^m\dot{\wedge} T\rangle = \int_X \langle \theta_u^m \dot{\wedge} T\rangle,$$
then, for every integer $0 \le l \le m$, we have 
$$\int_X \langle \theta_\varphi^{m-l} \wedge \theta_v^l  \dot{\wedge} T\rangle=  \int_X \langle \theta_u^{m-l} \wedge \theta_v^l \dot{\wedge} T\rangle.$$
\end{corollary}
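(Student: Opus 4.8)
\textbf{Proof proposal for Corollary \ref{cor-theoremmain}.}

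The plan is to prove (\ref{ine-thetauvtachldinhluong}) by induction on $l$, using Proposition \ref{pro-theoremmain} as the driving engine after reducing to the bi-degree $(n-2,n-2)$ case via a slicing/cutting-by-currents device. First I would dispose of the base case $l=0$, which is trivial since $\langle\theta_\varphi^m\dot\wedge T\rangle$ and $\langle\theta_u^m\dot\wedge T\rangle$ appear directly on the right-hand side. For the inductive step, the main point is that Proposition \ref{pro-theoremmain} is stated for $T$ of bi-degree $(n-2,n-2)$, whereas here $T$ has bi-dimension $(m,m)$; the standard trick is to replace $T$ by $\langle\theta_\varphi^{m-l-1}\dot\wedge T\rangle$ (or a suitable relative product thereof), which has bi-dimension $(l+1,l+1)$, and then apply the reversed Alexandrov--Fenchel inequality "one degree at a time". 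One must check the hypotheses: the no-mass conditions on $\{u=-\infty\}$ and $\{v=-\infty\}$ are inherited by relative non-pluripolar products against $T$ (this is recorded right before Proposition \ref{pro-sublinearnonpluripolar}), and the full-mass hypothesis (\ref{eq-hypo-fullmasthetau}) for the cut current follows from the global full-mass comparison $\int_X\langle\theta_\varphi^m\dot\wedge T\rangle\ge\int_X\langle\theta_u\wedge\theta_\varphi^{m-1}\dot\wedge T\rangle\ge\cdots$ together with monotonicity (Theorem \ref{th-monoticity}); one needs to argue that the global deficit controls each intermediate deficit.

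Concretely, here is the order of steps. \textbf{Step 1:} Set $\delta:=\int_X\langle\theta_\varphi^m\dot\wedge T\rangle-\int_X\langle\theta_u^m\dot\wedge T\rangle\ge0$. By monotonicity and telescoping, for each $0\le j\le m$ one has $0\le\int_X\langle\theta_\varphi^m\dot\wedge T\rangle-\int_X\langle\theta_u^j\wedge\theta_\varphi^{m-j}\dot\wedge T\rangle\le\delta$; in particular the intermediate mixed masses are all within $\delta$ of the top one, and the chain is monotone decreasing in $j$. \textbf{Step 2:} Fix $l\ge1$ and assume (\ref{ine-thetauvtachldinhluong}) holds for $l-1$. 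Apply Proposition \ref{pro-theoremmain} with the current $T$ there taken to be $S:=\langle\theta_\varphi^{m-l-1}\wedge\theta_v^{l-1}\dot\wedge T\rangle$ of bi-degree $(n-2,n-2)$, and with the roles of $(u,v,\varphi)$ unchanged. The hypothesis (\ref{eq-hypo-fullmasthetau}) for $S$, namely $\int_X\langle\theta_\varphi^2\dot\wedge S\rangle=\int_X\langle\theta_u\wedge\theta_\varphi\dot\wedge S\rangle$, need not hold exactly, but its failure is bounded by $\delta$ (Step 1 applied appropriately, plus the fact that inserting $\theta_v$ factors only decreases mass by monotonicity and is controlled); so one uses a perturbed version of Proposition \ref{pro-theoremmain}. \textbf{Step 3:} From the (perturbed) reversed Alexandrov--Fenchel inequality, bound
\[
\int_X\langle\theta_\varphi^{m-l}\wedge\theta_v^l\dot\wedge T\rangle-\int_X\langle\theta_u\wedge\theta_\varphi^{m-l-1}\wedge\theta_v^l\dot\wedge T\rangle
\]
by a product of two square-root factors, each of which, by the induction hypothesis applied with $v$ in place of one variable, is $\le C_M'\,\delta^{2^{-(l-1)}/2}=C_M'\,\delta^{2^{-l}}$ up to constants. \textbf{Step 4:} Telescope: $\int_X\langle\theta_\varphi^{m-l}\wedge\theta_v^l\dot\wedge T\rangle-\int_X\langle\theta_u^{m-l}\wedge\theta_v^l\dot\wedge T\rangle$ is a sum of $m-l$ terms of the form in Step 3, each bounded by $C_M\delta^{2^{-l}}$, giving the claim with an adjusted constant. \textbf{Step 5:} The "in particular" statement is immediate: if $\delta=0$ then the right-hand side of (\ref{ine-thetauvtachldinhluong}) vanishes, forcing equality of the mixed masses for every $l$.

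\textbf{Main obstacle.} The delicate point is Step 2--Step 3: making Proposition \ref{pro-theoremmain} usable when the full-mass hypothesis (\ref{eq-hypo-fullmasthetau}) holds only approximately, and then carefully tracking how the exponent degrades from $2^{-(l-1)}$ to $2^{-l}$ through the Cauchy--Schwarz step. One must verify that the error terms generated by relaxing (\ref{eq-hypo-fullmasthetau}) are linear in $\delta$ (hence dominated by $\delta^{2^{-l}}$ for $\delta$ small, and harmless for $\delta$ bounded because of the constant $M$), and that the square-root structure of the reversed inequality is exactly what produces the dyadic exponent $2^{-l}$ upon iteration. A secondary technical nuisance is keeping the constant $C_M$ genuinely dependent only on $M,m,n$: this requires that all masses appearing (the mixed $\theta_\varphi,\theta_v$ products against $T$) are bounded by a constant depending only on $M$, which follows from monotonicity since $v\le\varphi$ and $\int_X\langle\theta_\varphi^m\dot\wedge T\rangle<M$.
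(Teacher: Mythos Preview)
Your proposal is essentially correct and follows the same route as the paper: set up the deficit $\delta$, reduce to bi-degree $(n-2,n-2)$ by taking an auxiliary relative product as the new ``$T$'', apply Proposition~\ref{pro-theoremmain}, and let the square root halve the exponent at each inductive step. The paper organizes this as a double induction, proving the stronger intermediate bound
\[
\int_X \langle \theta_\varphi^{m-l_2} \wedge \theta_v^{l_2} \dot{\wedge} T\rangle- \int_X \langle \theta_\varphi^{m-l_1-l_2} \wedge \theta_u^{l_1} \wedge \theta_v^{l_2}  \dot{\wedge} T\rangle \le C_M \, \delta^{2^{-l_2}}
\]
for all $l_1+l_2\le l'$ (outer induction on $l'$, inner on $l_2$), using the auxiliary current $T'=\langle \theta_\varphi^{m-l_1-l_2}\wedge\theta_u^{l_1-1}\wedge\theta_v^{l_2-1}\dot\wedge T\rangle$; your single induction on $l$ plus telescoping amounts to the same thing once you allow $\theta_u$-factors in your $S$ (which you must, for the terms with $j\ge 1$ in Step~4). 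One small slip in Step~3: only \emph{one} of the two square-root factors from Proposition~\ref{pro-theoremmain} is controlled by the induction hypothesis (the $\theta_u$-factor, giving $\lesssim\delta^{2^{-l}}$); the other ($\theta_v$-factor) has no $\theta_u$ in it and is simply bounded by $M^{1/2}$, absorbed into $C_M$. This does not affect the conclusion. Finally, your observation that Proposition~\ref{pro-theoremmain} must be used in a ``perturbed'' form (hypothesis~(\ref{eq-hypo-fullmasthetau}) holding only up to an error $\le\delta$) is exactly right; the paper applies the proposition in the same way, and inspection of its proof shows the extra term coming from $\lim_k I_2$ is $\le \int_X\langle\theta_\varphi^2\dot\wedge T'\rangle-\int_X\langle\theta_u^2\dot\wedge T'\rangle$, hence harmlessly absorbed into the same square-root bound.
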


\proof In what follows, we denote by $C_M$ a positive constant depending only on $M$ and $n$, and the value of $C_M$ might vary from line to line. Put 
$$I:= \int_X \langle \theta_\varphi^m\dot{\wedge} T\rangle - \int_X \langle \theta_u^m \dot{\wedge} T\rangle.$$
 By the monotonicity of relative non-pluripolar products, we get 
\begin{align}\label{eq-hypo-fullmasthetaucor}
0 \le \int_X \langle \theta_\varphi^m\dot{\wedge} T\rangle - \int_X \langle \theta_\varphi^{m-l} \wedge \theta_u^{l} \dot{\wedge} T\rangle \le I
\end{align} 
for every $0 \le l \le m$. Observe that  the desired assertion in the case where $m=2$ is a direct consequence of Proposition \ref{pro-theoremmain}.  We prove  by induction on $l'$ that for every $0\le l_1, l_2\le m$ with $l_1+l_2 \le l'$ we have 
\begin{align}\label{eq-induclwedgemthetavarphi}
\int_X \langle \theta_\varphi^{m-l_2} \wedge \theta_v^{l_2} \dot{\wedge} T\rangle- \int_X \langle \theta_\varphi^{m-l_1-l_2} \wedge \theta_u^{l_1} \wedge \theta_v^{l_2}  \dot{\wedge} T\rangle \le C_M \, I^{2^{-l_2}},
\end{align}
for some constant $C_M$ depending only on $M,m$ and $n$. 

The desired inequality is a special case of (\ref{eq-induclwedgemthetavarphi}) when $l_1= m-l_2$.  When $l'=0$, the inequality (\ref{eq-induclwedgemthetavarphi}) is clear. Suppose that  (\ref{eq-induclwedgemthetavarphi}) holds for every $l_1,l_2$ with  $l_1+ l_2 \le l'-1$. We need to prove it for $l'$ in place of $l'-1$.  To this end, we now use another induction on $0 \le l_2 \le l'$ to prove the statement $(*)$ that (\ref{eq-induclwedgemthetavarphi}) holds for every $l_1$ with $l_1+ l_2 \le l'$. When $l_2=0$, the statement $(*)$ is a direct consequence of (\ref{eq-hypo-fullmasthetaucor}). Assume now that $(*)$ holds for $l_2-1$. We now prove it for $l_2$.    Let 
$$T':=\langle \theta_\varphi^{m-l_1-l_2} \wedge \theta_u^{l_1-1} \wedge \theta_v^{l_2-1} \dot{\wedge} T\rangle.$$
 We have
 $$  \langle \theta_\varphi^{m-l_1-l_2} \wedge \theta_u^{l_1} \wedge \theta_v^{l_2} \dot{\wedge} T\rangle= \langle \theta_u \wedge \theta_v \dot{\wedge} T'\rangle,$$
 and 
\begin{align*}
\int_X \langle \theta_u^2 \dot{\wedge} T'\rangle =\int_X \langle \theta_\varphi^{m-l_1-l_2} \wedge \theta_u^{l_1+1} \wedge \theta_v^{l_2-1} \dot{\wedge} T\rangle
\end{align*}
and
\begin{align*}
\int_X \langle \theta_\varphi^2 \dot{\wedge} T'\rangle= \int_X \langle \theta_\varphi^{m-l_1-l_2+2}  \wedge \theta_u^{l_1-1} \wedge \theta_v^{l_2-1} \dot{\wedge} T\rangle.
\end{align*}
By this and the induction hypothesis on $l_2$ that $(*)$ holds for $l_2-1$ and the monotonicity of relative non-pluripolar products, we obtain that
\begin{align*}
\int_X \langle \theta_\varphi^2 \dot{\wedge} T'\rangle- \int_X \langle \theta_u^2 \dot{\wedge} T'\rangle &\le \int_X \langle \theta_\varphi^{m-l_2+1} \wedge \theta_v^{l_2-1} \dot{\wedge} T\rangle-   \int_X \langle \theta_\varphi^{m-l_1-l_2} \wedge \theta_u^{l_1+1} \wedge \theta_v^{l_2-1} \dot{\wedge} T\rangle\\
&\le C_M I^{2^{-l_2+1}}.
\end{align*}
Note that $T'$ has no mass on $\{u= -\infty\}$ and $\{v= - \infty\}$ because $T$ does so; see \cite[Lemma 2.1]{Viet-generalized-nonpluri}.  Applying Proposition \ref{cor-theoremmain} to $u,v,\varphi, T'$ gives
$$  \int_X\langle \theta_\varphi \wedge \theta_v \dot{\wedge} T'\rangle-  \int_X\langle \theta_u \wedge \theta_v \dot{\wedge} T'\rangle \le C_M \bigg( \int_X \langle \theta_\varphi^2 \dot{\wedge} T'\rangle- \int_X \langle \theta_u^2 \dot{\wedge} T'\rangle\bigg)^{1/2} \le C_M \, I^{2^{-l_2}}.$$
We deduce that 
$$ \int_X \langle \theta_\varphi^{m-l_1-l_2+1} \wedge \theta_u^{l_1-1} \wedge \theta_v^{l_2} \dot{\wedge} T\rangle-   \int_X\langle \theta_\varphi^{m-l_1-l_2} \wedge \theta_u^{l_1} \wedge \theta_v^{l_2} \dot{\wedge} T\rangle \le C_M \, I^{2^{-l_2}}.$$
On the other hand, using the induction hypothesis on $l'-1$ gives
$$  \int_X\langle \theta_\varphi^{m-l_2} \wedge \theta_v^{l_2} \dot{\wedge} T\rangle - \int_X \langle \theta_\varphi^{m-l_1-l_2+1} \wedge \theta_u^{l_1-1} \wedge \theta_v^{l_2} \dot{\wedge} T\rangle \le C_M \, I^{2^{-l_2}}.$$
Summing up the last two inequalities gives the desired assertion $(*)$ for $l_2$. In other words, (\ref{eq-induclwedgemthetavarphi}) holds for every $l_1,l_2$ with $l_1+ l_2 \le l'$. This is what we want to prove. The proof is finished.
\endproof

We say that a  closed positive $(1,1)$-current $P$ is \emph{a current with analytic singularities associated to a coherent analytic ideal sheaf $\cali{S}$ on $X$} if locally on $X$, every local potential $u$ of $P$ satisfies
$$u=c \log \sum_{j=1}^M |f_j|+O(1),$$
where $c> 0$ is a constant, and  $\{f_1,\ldots, f_M\}$ are local generators of $\cali{S}$; see \cite[Definition 9.1]{Boucksom_l2}. By Demailly's analytic approximation of psh functions, every big cohomology class contains such a current $P$ which is K\"ahler.

We fix a smooth K\"ahler form $\omega$ on $X$ with $\int_X \omega^n =1$. For every positive $(p,p)$-current $S$, the mass of $S$ is defined by 
$$\|S\|:= \int_X S \wedge \omega^{n-p}.$$
Similarly, we can define the mass norm of a pseudoeffective cohomology classes.  

\begin{theorem} \label{the-congthemomega}  Let $1 \le m \le n$ be an integer. Let $\alpha$ be a big cohomology $(1,1)$-class in $X$. Let $T$ and $T'$ be closed positive currents in $\alpha$ such that $T'$ is more singular than $T$. Assume that there exists a closed positive current $P$ with analytic singularities associated to a coherent ideal sheaf such that $P$ is more singular than $T$.  Let $\theta$ be a closed continuous real $(1,1)$-form. Let $M>0$ be constant greater than $\|\alpha\|$, $\|\langle \alpha^m \rangle\|$ and $\|\theta\|_{\cali{C}^0}$ such that $P \ge M^{-1} \omega$.   Then, there exists a constant $C_M>0$ depending only on $M,m,\omega,X$ such that
\begin{align}\label{eq-TTcongvoiomega}
 \big \| \{\langle (T'+\theta)^m \rangle\} -\{\langle (T+\theta)^m \rangle\} \big\| \le  C_M\big\|  \{\langle T'^m \rangle\}- \{\langle T^m \rangle\}\big\|^{2^{-m}}.
\end{align}
In particular, if $T+\theta$ and $T'+ \theta$ are positive, then there holds
\begin{align}\label{state-tuonduongTcongtheta}
\{\langle T'^m \rangle\}= \{\langle T^m \rangle\}  \quad \text{if and only if}\quad  \{\langle (T'+\theta)^m \rangle\}= \{\langle (T+\theta)^m \rangle\}.
\end{align}
\end{theorem}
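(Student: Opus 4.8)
The plan is to deduce Theorem~\ref{the-congthemomega} from Corollary~\ref{cor-theoremmain} by an integration argument against powers of $\omega$, handling the extra form $\theta$ via a telescoping expansion. First I would reduce the cohomological statement to a family of numerical statements: since $\{\langle (T'+\theta)^m\rangle\}-\{\langle (T+\theta)^m\rangle\}$ is a pseudoeffective class (by Theorem~\ref{th-monoticity}, as $T'+\theta$ is more singular than $T+\theta$), its mass norm is $\int_X\big(\langle (T+\theta)^m\rangle - \langle (T'+\theta)^m\rangle\big)\wedge\omega^{n-m}$, and expanding $(T+\theta)^m = \sum_{l}\binom{m}{l}\theta^{m-l}\wedge (T)^{\text{rest}}$-style via the binomial-type symmetry of relative non-pluripolar products reduces everything to controlling, for each multi-index, the difference $\int_X\langle (T+\theta)^{m-l}\wedge (T+\theta)^l\dot\wedge\omega^{n-m}\rangle$ between $T$ and $T'$. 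The key is that each such term should be bounded by a power of $I:=\|\{\langle T^m\rangle\} - \{\langle T'^m\rangle\}\|$.

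Next I would set up the right ambient current to feed into Corollary~\ref{cor-theoremmain}. Write $T=\theta_u$, $T'=\theta_{u'}$ with $u'$ more singular than $u$ in the big class $\alpha$, and let $\varphi$ be a potential for $P$ (which can be taken with $P\ge M^{-1}\omega$, so $\varphi$ is $\alpha$-psh with the extra positivity), so $u,u'\le\varphi$ after normalization. The hypothesis that $P$ is more singular than $T$ — hence $u\le\varphi+O(1)$ — together with $P$ having analytic singularities gives that $T$ and $T'$ put no mass on $\{\varphi=-\infty\}=I_P$, and likewise on $\{u=-\infty\}$, $\{u'=-\infty\}$, which is exactly the admissibility needed. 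The relevant ``$T$'' in the statement of Corollary~\ref{cor-theoremmain} should be chosen to carry both the ambient $\omega^{n-m}$ and the fixed copies of $\theta$; concretely, one applies the corollary with bi-dimension $(m,m)$ currents of the form $\langle \theta^{j}\wedge\theta_\varphi^{k}\dot\wedge\omega^{n-m}\rangle$ or uses that $\theta$ is continuous (so $\theta = \ddc g + \theta_0$ locally with $g$ continuous, or more globally $\theta+C\omega\ge 0$) to absorb $\theta$ into a genuine positive form at the cost of the constant $M$. The upshot is an estimate of the shape $\big|\int_X\langle\theta_\varphi^{\,\bullet}\rangle - \int_X\langle\theta_u^{\,\bullet}\rangle\big|\le C_M I^{2^{-m}}$ and similarly with $u'$, and the triangle inequality then closes the gap between $T$ and $T'$ directly.

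Then I would run the telescoping/binomial bookkeeping: expand $\langle(T+\theta)^m\rangle$ and $\langle(T'+\theta)^m\rangle$ into $\sum_{l=0}^m\binom{m}{l}$-many relative products where $l$ factors are $\theta_u$ (resp.\ $\theta_{u'}$) and $m-l$ factors are $\theta$, test against $\omega^{n-m}$, and compare termwise. For each term, replace $T$-factors by $\varphi$-factors one at a time using Corollary~\ref{cor-theoremmain} (with the partial product as the ambient current $T'$ of that corollary — note that partial relative products of admissible currents remain admissible by \cite[Lemma 2.1]{Viet-generalized-nonpluri}, exactly as invoked in the proof of Corollary~\ref{cor-theoremmain}), obtaining that the $T$-term and the $T'$-term each differ from the common all-$\varphi$ term by $O_M(I^{2^{-m}})$. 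Summing the $O(1)$-many terms yields \eqref{eq-TTcongvoiomega}. Finally, \eqref{state-tuonduongTcongtheta} follows: if $\{\langle T'^m\rangle\}=\{\langle T^m\rangle\}$ then $I=0$ so the left side of \eqref{eq-TTcongvoiomega} vanishes; the converse is obtained by applying \eqref{eq-TTcongvoiomega} with $\theta$ replaced by $-\theta$ and $T,T'$ replaced by $T+\theta,T'+\theta$ (legitimate since $-\theta$ is again continuous and $T+\theta\ge 0$ is the new reference), which bounds $\|\{\langle T'^m\rangle\}-\{\langle T^m\rangle\}\|$ by a power of $\|\{\langle(T'+\theta)^m\rangle\}-\{\langle(T+\theta)^m\rangle\}\|$.

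The main obstacle I anticipate is the bookkeeping around the extra form $\theta$: Corollary~\ref{cor-theoremmain} is stated for honest $\theta$-psh potentials and positive ambient currents, whereas here $\theta$ is only continuous and $T+\theta$ need not be positive in the general estimate \eqref{eq-TTcongvoiomega}, so one must carefully justify that all the relative non-pluripolar products $\langle(T+\theta)^{m-l}\wedge\theta_v^l\dot\wedge\omega^{n-m}\rangle$ are well defined and that monotonicity (Theorem~\ref{th-monoticity}) and the comparison estimates still apply after writing $\theta = \theta^+ - \theta^-$ with $\theta^\pm$ closed continuous positive forms, or after shifting by a large multiple of $\omega$ and subtracting it back cohomologically. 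Making the constants genuinely depend only on $M,m,\omega,X$ — in particular tracking that the number of telescoping steps is bounded by $m$ and each step costs a factor $C_M$ and an exponent halving — is routine once this setup is in place, but needs to be stated cleanly.
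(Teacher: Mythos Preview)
Your reduction via multilinearity and the binomial expansion of $(T+\theta)^m$ to the problem of bounding $\big\|\{\langle T^l\rangle\}-\{\langle T'^l\rangle\}\big\|$ for each $1\le l\le m$ is correct and is exactly what the paper does. The gap is in how you invoke Corollary~\ref{cor-theoremmain} and in what you do with its output.

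First, the roles are inverted. The hypothesis says $P$ is \emph{more} singular than $T$, so a potential of $P$ lies \emph{below} a potential of $T$, not above; your claim ``$u,u'\le\varphi$'' with $\varphi$ a potential of $P$ is the wrong way round. In the corollary's notation one must take $\varphi$ to be a potential of $T$ (the least singular of the three), $u$ a potential of $T'$, and $v$ a potential of $P$; then $u,v\le\varphi$ holds and the quantity $I$ in the corollary becomes $\int_X\langle T^m\rangle\wedge\omega^{n-m}-\int_X\langle T'^m\rangle\wedge\omega^{n-m}$, as needed.

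Second, and this is the essential missing idea: with the correct roles and ambient current $\omega^{n-m}$, Corollary~\ref{cor-theoremmain} controls
\[
\int_X\big(\langle T^{l}\wedge P^{m-l}\rangle-\langle T'^{\,l}\wedge P^{m-l}\rangle\big)\wedge\omega^{n-m},
\]
not $\int_X\big(\langle T^{l}\rangle-\langle T'^{\,l}\rangle\big)\wedge\omega^{n-l}$. Since $P$ is not smooth, there is no direct way to extract $\langle T^l\rangle$ from $\langle T^l\wedge P^{m-l}\rangle$, and the corollary forces $\theta_v$ to lie in the \emph{same} class $\alpha$, so you cannot simply take $\theta_v=\omega$. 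The paper's proof bridges this by principalizing the ideal sheaf of $P$: one passes to a modification $\sigma:\widehat X\to X$ on which $\sigma^*P=P_1+P_2$ with $P_1$ an effective divisor and $P_2$ a smooth closed positive form satisfying $P_2\ge M^{-1}\sigma^*\omega$. Upstairs, the $P$-factors in the non-pluripolar product reduce to smooth $P_2$-factors, and $P_2\ge M^{-1}\sigma^*\omega$ then trades them for $\sigma^*\omega$-factors; pushing forward yields the bound on $\big\|\{\langle T^l\rangle\}-\{\langle T'^l\rangle\}\big\|$. This resolution step is the whole reason the hypotheses demand that $P$ have analytic singularities along a coherent ideal sheaf and satisfy $P\ge M^{-1}\omega$; your proposal uses neither property and has no substitute mechanism for converting the $P^{m-l}$ into $\omega^{m-l}$.
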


Examples of $T$ in the above result are currents with minimal singularities in $\alpha$. The assertion (\ref{state-tuonduongTcongtheta})  was known when $\alpha$ is K\"ahler (see \cite{DiNezza-stability,Viet-generalized-nonpluri}) and seems to be new for big classes even when $m=n$. 

\begin{proof} As above we denote by $C_M$ a positive constant depending only on $M,m,\omega,X$ whose value can vary from line to line.  We can choose such a  $C_M$ big enough such that $\omega':=\theta+ C_M \omega \ge 0$.  By the multi-linearity of non-pluripolar products, it is sufficient to prove (\ref{eq-TTcongvoiomega}) for $\omega'$ in place of $\theta$. In order to do so, we only need to check that 
\begin{align}\label{ine-lmhieuTTphay}
 \big \| \{\langle T'^l \rangle\}- \{\langle T^l \rangle \}\big \| \le C_M \big \| \{\langle T'^m \rangle\}- \{\langle T^m \rangle \}\big \|^{2^{-m}}
\end{align}
 for every $1 \le l \le m$. 

Principalizing the ideal sheaf associated to $P$ (see \cite{Hironaka-desingular,Wlodarczyk}), we get a smooth modification $\sigma: \widehat X \to X$ such that $\widehat P:= \sigma^*P= P_1+ P_2$, where $P_1$ is  a linear combination with nonnegative coefficients of currents of integration along smooth hypersurfaces, and $P_2$ is a closed positive smooth form. 
Since $P \ge M^{-1} \omega$, we get 
\begin{align}\label{ine-P2omefa}
P_2 \ge M^{-1} \sigma^* \omega
\end{align}
Let $\widehat T:= \sigma^* T$ and $\widehat T':= \widehat T'$. Since non-pluripolar products have no mass on analytic sets, using the hypothesis, we have
$$\int_{\widehat X}\langle \widehat T^m \rangle \wedge \sigma^* \omega^{n-m}=\int_{X}\langle T^m \rangle \wedge \omega^{n-m}, \quad \int_{X}\langle T'^m \rangle \wedge \omega^{n-m}=\int_{\widehat X}\langle \widehat T'^m \rangle \wedge \sigma^* \omega^{n-m}.$$
Using this and applying Corollary \ref{cor-theoremmain} to potentials of $\widehat T, \widehat T', \widehat P$ give
$$\int_{\widehat X} \big(\langle \widehat T^l \wedge \widehat P^{m-l} \rangle- \langle \widehat T'^l \wedge \widehat P^{m-l} \rangle\big) \wedge \sigma^* \omega^{n-m} \le C_M \| \langle T^m \rangle - \langle T'^m \rangle\|^{2^{-m+l}}$$
for every $1 \le l \le m$. 
Let $R$ be a closed positive current representing the cohomology class $\{\langle \widehat T^l \rangle\}-\{\langle \widehat T'^l \rangle\}$. By the above inequality  and the smoothness of $P_2$, we obtain 
$$\int_{\widehat X} R \wedge P_2^{m-l} \wedge \sigma^* \omega^{n-m} \le C_M \| \langle T^m \rangle - \langle T'^m \rangle\|^{2^{-m+l}}.$$
This combined with (\ref{ine-P2omefa}) gives
$$\int_{\widehat X} R \wedge \sigma^* \omega^{n-l} \le C_M \| \langle T^m \rangle - \langle T'^m \rangle\|^{2^{-m+l}}.$$ 
It follows that 
\begin{align*}
\int_X \big(\langle T^l \rangle - \langle T'^l \rangle \big) \wedge \omega^{n-l} &=\int_{\widehat X} \big(\langle \widehat T^l \rangle- \langle \widehat T'^l \rangle) \wedge \sigma^* \omega^{n-l}\\
&=\int_{\widehat X} R \wedge \sigma^* \omega^{n-l} \le C_M \| \langle T^m \rangle - \langle T'^m \rangle\|^{2^{-m+l}}.
\end{align*}
The desired assertion follows because $\|\langle T^m \rangle\| \le \|\langle \alpha^m \rangle\|$. 

We now prove (\ref{state-tuonduongTcongtheta}). The implication $\Rightarrow$ is clear. The other one is obtained from the first one by applying to $T+\theta, T'+\theta$ in place of $T, T'$ respectively, and $-\theta$ in place of $\theta$.  The proof is complete.
\end{proof}

\begin{corollary} \label{cor-Tcongvoitheta} Let $1 \le m \le n$ be an integer. Let $\alpha$ be a big cohomology $(1,1)$-class in $X$. Let $T$ be  a closed positive current in $\alpha$ such that $\{\langle T^m \rangle\}= \langle \alpha^m \rangle$. Then we have  
$$\{\langle T^l \rangle\}= \langle \alpha^l \rangle$$
for every $1\le l \le m$. In particular, if $\alpha$ is big nef and $\theta$ is a closed continuous real $(1,1)$-form representing a cohomology class $\gamma$, then  
\begin{align}\label{eq-congthetabignef}
\{\langle (T+ \theta)^m\rangle \} = (\alpha +\gamma)^m.
\end{align}
\end{corollary}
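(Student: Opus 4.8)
The plan is to derive everything from Corollary \ref{cor-theoremmain} together with Theorem \ref{the-congthemomega}. First I would fix a K\"ahler form $\omega$ as in the text and a closed positive current $P$ with analytic singularities in the big class $\alpha$ which is K\"ahler, say $P \ge M^{-1}\omega$ for a suitable constant $M$; such a $P$ exists by Demailly's analytic approximation. Since $\{\langle T^m\rangle\} = \langle \alpha^m\rangle = \{\langle P^m\rangle\}$ (the latter equality because $P$, being K\"ahler with analytic singularities, is of full mass — this is the standard fact that currents with minimal singularities, and more generally such $P$, compute $\langle \alpha^m\rangle$), the hypothesis of Corollary \ref{cor-theoremmain} is met with $\varphi$ the potential of $P$ and $u$ the potential of $T$ (note $u \le \varphi + O(1)$ after normalizing, since $P$ is more singular than any current in $\alpha$; more carefully one replaces $P$ by a current more singular than $T$, which one may arrange by taking a K\"ahler current with analytic singularities associated to the multiplier-type ideal of $T$, or simply works on a modification as in Theorem \ref{the-congthemomega}). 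Applying the ``in particular'' part of Corollary \ref{cor-theoremmain} with $v = \varphi$ (or directly comparing $\langle T^l \wedge P^{m-l}\rangle$ for all $l$) and using the monotonicity Theorem \ref{th-monoticity}, I get $\int_X \langle \theta_u^{m-l}\wedge\theta_\varphi^l\dot\wedge\,[\text{pt-type}]\rangle$ equalities; taking $l$ so as to isolate $\langle T^l\rangle$ and intersecting against $\omega^{n-l}$ yields $\|\langle T^l\rangle\| = \|\langle\alpha^l\rangle\|$, hence (combined with the inequality $\{\langle T^l\rangle\}\le\langle\alpha^l\rangle$ of \cite{Viet-generalized-nonpluri}) the cohomological equality $\{\langle T^l\rangle\} = \langle\alpha^l\rangle$ for every $1\le l\le m$.

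For the second assertion, suppose $\alpha$ is big nef and $\theta$ represents $\gamma$. The point is that since $\alpha$ is nef, for every $\epsilon > 0$ the class $\alpha + \epsilon\gamma$ is still big (bigness is open, and nef + big stays big under small perturbations), so $T + \epsilon\theta$ can be compared against a K\"ahler current with analytic singularities in $\alpha + \epsilon\theta$'s class; but it is cleaner to invoke Theorem \ref{the-congthemomega} directly: we have just shown $\{\langle T^m\rangle\} = \langle\alpha^m\rangle$, and we may take $T' = P_{\min}$ a current with minimal singularities in $\alpha$, so that $\{\langle T'^m\rangle\} = \langle\alpha^m\rangle = \{\langle T^m\rangle\}$ with $T'$ more singular — wait, one needs $T'$ more singular than $T$; instead take $T' = T$ and $T$ itself minimal, or apply Theorem \ref{the-congthemomega} in the symmetric form to $T$ and a minimal-singularities current. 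In any case Theorem \ref{the-congthemomega}'s conclusion \eqref{state-tuonduongTcongtheta} gives $\{\langle(T+\theta)^m\rangle\} = \{\langle(T_{\min}+\theta)^m\rangle\}$; and since $\alpha$ is nef, $T_{\min} + \theta$ is a current in $\alpha + \gamma$ whose non-pluripolar self-product computes $(\alpha+\gamma)^m$ — this uses that for a nef class the minimal-singularities current has full mass, together with the fact that adding a continuous form preserves this, which is exactly what \eqref{eq-congthetabignef} asserts for the reference current. So \eqref{eq-congthetabignef} follows once we know $\{\langle(T_{\min}+\theta)^m\rangle\} = (\alpha+\gamma)^m$ for $\alpha$ nef.

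The main obstacle I anticipate is precisely this last point: verifying that a minimal-singularities current in a big \emph{nef} class $\alpha$, after adding a continuous real $(1,1)$-form $\theta$, has non-pluripolar self-product of full mass $(\alpha+\gamma)^m$. This is where nefness is essential (it fails for general big $\alpha$, where $\langle\alpha^m\rangle$ can be strictly smaller than $\alpha^m$), and the argument should run: a big nef class $\alpha$ satisfies $\langle\alpha^m\rangle = \alpha^m$ (orthogonality/Teissier-type, or the fact that for nef classes the positive intersection product equals the ordinary product), and this property is stable under adding a nef — here one uses that $\gamma$ need not be nef, so instead one argues via approximation $\alpha+\epsilon\omega$ which is K\"ahler, applies the K\"ahler case (known, as noted after Theorem \ref{the-congthemomega}) to get $\{\langle(T_{\min}+\theta+\epsilon\omega)^m\rangle\} = (\alpha+\gamma+\epsilon\{\omega\})^m$, and lets $\epsilon\to 0$ using continuity of non-pluripolar mass along this monotone-type family together with the upper semicontinuity/inequality $\{\langle T^m\rangle\}\le\langle\alpha^m\rangle$ to squeeze. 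The routine calculations — the bookkeeping of which currents are more singular than which, normalizations of potentials, and the passage between cohomological inequalities and mass inequalities via wedging with $\omega^{n-l}$ — I would not spell out in full.
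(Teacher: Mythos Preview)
Your plan for the first part is muddled in two places. First, a K\"ahler current $P$ with analytic singularities in $\alpha$ is \emph{more} singular than a general current in $\alpha$, so its potential $\varphi$ lies \emph{below} $u$, not above; the hypothesis $u\le\varphi$ of Corollary~\ref{cor-theoremmain} goes the wrong way. Second, such a $P$ does not in general satisfy $\{\langle P^m\rangle\}=\langle\alpha^m\rangle$. The paper avoids both issues by invoking Theorem~\ref{the-congthemomega} as a black box (rather than redoing its modification argument): applied to $T_{\min}$ and $T$ with $\theta=\omega$ a K\"ahler form, the hypothesis $\{\langle T^m\rangle\}=\langle\alpha^m\rangle=\{\langle T_{\min}^m\rangle\}$ makes the right-hand side of \eqref{eq-TTcongvoiomega} vanish, hence $\{\langle(T+\omega)^m\rangle\}=\{\langle(T_{\min}+\omega)^m\rangle\}$. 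Expanding both sides by multilinearity (legitimate since $\omega$ is smooth) and using the termwise inequality $\{\langle T^l\rangle\}\le\{\langle T_{\min}^l\rangle\}=\langle\alpha^l\rangle$ forces equality for every $l$.

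For the second assertion, the ``main obstacle'' you anticipate is not an obstacle at all; the paper simply says it ``follows immediately from the first one''. Since $\theta$ is continuous, one has $\langle(T+\theta)^m\rangle=\sum_{l=0}^m\binom{m}{l}\langle T^l\rangle\wedge\theta^{m-l}$ directly from the definition of the non-pluripolar product, hence in cohomology $\{\langle(T+\theta)^m\rangle\}=\sum_{l}\binom{m}{l}\{\langle T^l\rangle\}\wedge\gamma^{m-l}$. By the first part $\{\langle T^l\rangle\}=\langle\alpha^l\rangle$, and since $\alpha$ is nef one has $\langle\alpha^l\rangle=\alpha^l$; the sum is then exactly $(\alpha+\gamma)^m$. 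No approximation $\alpha+\epsilon\omega\to\alpha$ and no separate analysis of $T_{\min}+\theta$ is needed.
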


The equality (\ref{eq-congthetabignef}) was proved in  \cite[Theorem 1.1]{Lu-Darvas-DiNezza-singularitytype} in the case where $m=n$. Note that in the last case, if additionally $T+\theta \ge 0$, then using \cite[Corollary 6.4]{Demailly_regula_11current} and the fact that the Lelong numbers of $T$ vanish, we get that $\alpha+\gamma$ is nef.

\begin{proof}  
Let $T_{\min}$ be a current with minimal singularity in $\alpha$. By Theorem \ref{the-congthemomega} for $T$ and $T_{\min}$ and the hypothesis, we get
$$\{\langle T+\omega)^m\rangle\}= \{\langle (T_{\min}+ \omega)^m\rangle\}.$$
Expanding the sums in both sides and using the fact that $\{\langle T^l\rangle\}\le \langle T_{\min}^l \rangle$ for every $1\le l \le m$, we get the first  desired equality.
The second desired equality follows immediately from the first one. This finishes the proof. 
\end{proof}

\begin{corollary} \label{cor-Tcongvoitheta2} Let $\alpha_1$ and $\alpha_2$ be big nef cohomology classes  on $X$. Let $T_j$ be a closed positive current in $\alpha_j$ for $j=1,2$. Then, 
\begin{align}\label{eq-congu1u2theta}
\big\{\big\langle (T_1+T_2)^m\big\rangle \big\} = \langle (\alpha_1+\alpha_2)^m \rangle
\end{align}
 if and only if   
\begin{align}\label{eq-congu1u2thetatach}
\{\langle T_j^m\rangle \} = \langle \alpha_j^m \rangle
\end{align}
 for $j=1,2$.
\end{corollary}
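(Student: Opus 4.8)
The plan is to reduce the equivalence to a statement about mixed non-pluripolar products, settle the implication (\ref{eq-congu1u2theta})$\Rightarrow$(\ref{eq-congu1u2thetatach}) for free, and obtain the converse by pushing $T_1$ and $T_2$ into K\"ahler classes via Corollary \ref{cor-Tcongvoitheta}.

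First I would record that, since $\langle(T_1+T_2)^m\rangle$ carries no mass on $I_{T_1}\cup I_{T_2}$ and coincides plurifine-locally with the binomial expansion of the power, one has
\[
\langle (T_1+T_2)^m\rangle=\sum_{a+b=m}\binom{m}{a}\,\mathbf{1}_{X\setminus(I_{T_1}\cup I_{T_2})}\langle T_1^a\wedge T_2^b\rangle\ \le\ \sum_{a+b=m}\binom{m}{a}\langle T_1^a\wedge T_2^b\rangle
\]
as closed positive currents. Passing to cohomology and using the comparison inequality $\{\langle T_1^a\wedge T_2^b\rangle\}\le\alpha_1^a\alpha_2^b$ of \cite[Theorem 1.1]{Viet-generalized-nonpluri} (here $\langle\alpha_1^a\alpha_2^b\rangle=\alpha_1^a\alpha_2^b$ since the classes are nef) gives
\[
(\alpha_1+\alpha_2)^m-\{\langle (T_1+T_2)^m\rangle\}\ \ge\ \sum_{a+b=m}\binom{m}{a}\bigl(\alpha_1^a\alpha_2^b-\{\langle T_1^a\wedge T_2^b\rangle\}\bigr)\ \ge\ 0
\]
with all classes pseudoeffective. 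Since a finite sum of pseudoeffective classes can vanish only if every summand vanishes (compare mass norms of positive representatives), the equality (\ref{eq-congu1u2theta}) forces $\{\langle T_1^a\wedge T_2^b\rangle\}=\alpha_1^a\alpha_2^b$ for every $a+b=m$; taking $(a,b)=(m,0)$ and $(0,m)$ proves (\ref{eq-congu1u2theta})$\Rightarrow$(\ref{eq-congu1u2thetatach}).

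For the converse, assume (\ref{eq-congu1u2thetatach}). Fix $\varepsilon>0$. As $\alpha_j$ is nef, $\alpha_j^\varepsilon:=\alpha_j+\varepsilon\{\omega\}$ is a K\"ahler class and $T_j^\varepsilon:=T_j+\varepsilon\omega$ a closed positive $(1,1)$-current in it; applying the equality (\ref{eq-congthetabignef}) of Corollary \ref{cor-Tcongvoitheta} to $\alpha_j$ (big nef, with $\{\langle T_j^m\rangle\}=\alpha_j^m=\langle\alpha_j^m\rangle$) and the continuous form $\varepsilon\omega$ gives $\{\langle (T_j^\varepsilon)^m\rangle\}=(\alpha_j^\varepsilon)^m$, so $T_1^\varepsilon$ and $T_2^\varepsilon$ are of full mass in K\"ahler classes. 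In the K\"ahler case the sought equality is known: if $S_1,S_2$ are closed positive $(1,1)$-currents of full mass in K\"ahler classes $\beta_1,\beta_2$, then $\{\langle (S_1+S_2)^m\rangle\}=(\beta_1+\beta_2)^m$, which follows from the full-mass property of mixed Monge--Amp\`ere products of finite-energy potentials (see \cite{Lu-Darvas-DiNezza-singularitytype,Viet-generalized-nonpluri}). Applying this to $S_j=T_j^\varepsilon$ yields $\{\langle(T_1+T_2+2\varepsilon\omega)^m\rangle\}=(\alpha_1+\alpha_2+2\varepsilon\{\omega\})^m$. Both sides are polynomials in $\varepsilon$ with coefficients in $H^{2m}(X,\R)$ — the left one because $\omega$ is smooth, so $\langle(T_1+T_2+2\varepsilon\omega)^m\rangle=\sum_{k}\binom{m}{k}(2\varepsilon)^k\langle(T_1+T_2)^{m-k}\rangle\wedge\omega^k$ — hence, letting $\varepsilon\to0$, $\{\langle(T_1+T_2)^m\rangle\}=(\alpha_1+\alpha_2)^m$, i.e. (\ref{eq-congu1u2theta}).

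I expect the genuine difficulty to be concentrated in the K\"ahler input of the last paragraph, equivalently in the assertion that full mass of $T_j$ transfers to full mass of the relative products $\langle T_1^a\dot{\wedge}R\rangle$ against an arbitrary closed positive background $R$. Corollary \ref{cor-theoremmain} by itself is not enough for this: the reversed Alexandrov--Fenchel inequality promotes top-degree full mass with respect to a \emph{fixed} background current to mixed-degree full mass with respect to the same background, but does not change the background. A self-contained proof of the K\"ahler case would therefore proceed by induction on the number of $T_2$-factors, isolating one factor in a relative product via Proposition \ref{pro-sublinearnonpluripolar}(i) and then applying Corollary \ref{cor-theoremmain} together with the monotonicity Theorem \ref{th-monoticity} along a decreasing approximation $T_j^{(k)}=\theta_j+\ddc\max\{u_j,\psi_j-k\}\downarrow T_j$ by currents of minimal-singularity type (where $\psi_j$ is a potential of a current with minimal singularities in $\alpha_j$); the delicate point is the control of the pluripolar sets $I_{T_j}$ via Proposition \ref{pro-sublinearnonpluripolar}(ii).
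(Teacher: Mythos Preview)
Your proof is correct and follows essentially the same route as the paper: the forward implication is obtained by expanding $(T_1+T_2)^m$ and using monotonicity (the paper just says ``clear by monotonicity''), and the converse is reduced to the known K\"ahler case by adding a K\"ahler form and invoking Corollary~\ref{cor-Tcongvoitheta}. The only difference is in the return step: the paper applies the equivalence (\ref{state-tuonduongTcongtheta}) of Theorem~\ref{the-congthemomega} to $T_1+T_2$ and a current with minimal singularities in $\alpha_1+\alpha_2$ to strip the added $2\omega'$, whereas you expand $\langle(T_1+T_2+2\varepsilon\omega)^m\rangle$ as a polynomial in $\varepsilon$ and let $\varepsilon\to 0$; both are valid and equally short.
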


The las result was obtained in  \cite[Theorem 1.3 and Corollary 4.2]{Lu-Darvas-DiNezza-mono} when $m=n$. 

\begin{proof}[Proof of Corollary  \ref{cor-Tcongvoitheta2}] 
At this point, the proof is similar to that of \cite[Corollary 4.2]{Lu-Darvas-DiNezza-mono}. The implication from (\ref{eq-congu1u2theta}) to (\ref{eq-congu1u2thetatach}) is clear thanks to the monotonicity. Assume now (\ref{eq-congu1u2thetatach}).

Let $\omega'$ be a big enough K\"ahler form such that $\{\omega'\}+ \alpha_j$ is K\"ahler for $j=1,2$. Using this, the fact that $\alpha_j$ is big nef, and Theorem  \ref{the-congthemomega}, we deduce that 
$$\{\langle (T_j+\omega')^m\rangle \} = (\alpha_j + \{\omega'\})^m $$
for $j=1,2$. Now the convexity of the class of currents of full mass intersection in K\"ahler cohomology classes (see, for example, \cite[Theorem 1.3]{Viet-generalized-nonpluri} or \cite{DiNezza-stability}) gives that 
$$\big\{ \big\langle (T_1+T_2+ 2 \omega')^m \big\rangle \big\} = (\alpha_1 +\alpha_2+ 2\{\omega'\})^m.$$
By this and Theorem  \ref{the-congthemomega} again, we get the desired equality. The proof is finished.
\end{proof}

\section{Proof of Theorem \ref{the-self-intersec}} \label{sec-the1}

In this section, we will give a proof of Theorem \ref{the-self-intersec}.   
The following result implies Theorem \ref{the-self-intersec} in the case where $\alpha$ is K\"ahler.

\begin{theorem}\label{the-kahlerself-intersec} Let $\cali{K}_0$ be a compact subset in the K\"ahler cone of $(1,1)$-classes on $X$. Let $\alpha\in \cali{K}_0$ and  let $T$ be a closed positive current in $\alpha$. Let $1\le m \le n$ be an integer.  Let $\cali{V}$ be  the set of  maximal irreducible analytic subsets $V$ of dimension at least $n-m$ of $X$ such that the generic Lelong number $\nu(T,V)$ of $T$ along $V$ is strictly positive.
 Then, we have  
\begin{align}\label{ine-obstructionselfinterkahler}
\big\| \alpha^m - \{\langle T^m \rangle \}  \big\| \ge C\sum_{V \in \cali{V}}\big(\nu(T,V)\big)^{n- \dim V} \, \vol(V),
\end{align}
for some constant $C>0$ independent of $\alpha$ (but depending on $\cali{K}_0$). 
\end{theorem}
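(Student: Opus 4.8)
\textbf{Proof proposal for Theorem \ref{the-kahlerself-intersec}.}

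The plan is to reduce the estimate to the codimension-one case handled in Lemma \ref{le-truonghom=1lelong} by an inductive slicing-through-tangent-currents argument, passing from $T$ on $X$ to a well-chosen tangent current along a maximal analytic set $V\in\cali{V}$, and using the comparison of Lelong numbers of density currents (Corollary \ref{cor-sosanhlelong}) to keep track of the multiplicities $\nu(T,V)$. First I would fix $V\in\cali{V}$ of dimension $l$ with $l\ge n-m$ and set $p:=n-l$, so $p\le m$. Writing $\alpha$ K\"ahler and $T=\theta_u$ for a K\"ahler representative $\theta$ of $\alpha$, I want to compare $\langle T^m\rangle$ with the relative product $\langle T^{m-p}\,\dot\wedge\,\langle T^p\rangle\rangle$ and then localize along $V$: the point is that $\langle T^p\rangle$ carries mass at least $\nu(T,V)^p\,[V]$ away from the bad locus, by Siu's semicontinuity / Demailly's comparison, while the cohomological defect $\alpha^p-\{\langle T^p\rangle\}$ controls it. The key structural input is Lemma \ref{le-truonghom=1lelong} applied iteratively, together with Corollary \ref{cor-theoremmain} (the reversed Alexandrov--Fenchel inequality), which lets me convert a full-mass statement for $\langle T^p\rangle$ into one for $\langle T^m\rangle$ at the cost of a power $2^{-(m-p)}$ — but since here we only need a lower bound for $\|\alpha^m-\{\langle T^m\rangle\}\|$ in terms of $\|\alpha^p-\{\langle T^p\rangle\}\|$, I will use Corollary \ref{cor-theoremmain} in the contrapositive direction to get $\|\alpha^m-\{\langle T^m\rangle\}\|\ge C\,\|\alpha^p-\{\langle T^p\rangle\}\|$ up to the K\"ahler-cone-uniform constant, which holds precisely because $\cali{K}_0$ is compact in the K\"ahler cone (so all the $C_M$'s of Corollary \ref{cor-theoremmain} are uniform).

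Next I would run the induction on $m$. For $m=1$ the statement is a special case of Lemma \ref{le-truonghom=1lelong} with $P=T$, $p=0$ — more precisely $\|\alpha-\{\langle T\rangle\}\|\ge \sum_V \nu(T,V)\vol(V)$ over hypersurfaces $V$, which is Siu's decomposition. For the inductive step, given $V\in\cali{V}$ of dimension $l=n-p$ I produce a tangent current $T_\infty$ to $T$ along $V$ whose cohomology class is $\kappa^V(T)$; by Theorem \ref{th-sosanhVV'densityDS} and Corollary \ref{cor-sosanhlelong}, $T_\infty$ "sees" the multiplicity $\nu(T,V)$ along the zero section, and by Theorem \ref{the-phanbucuarestricteddenstyvanonpluri} the relative non-pluripolar product $\pi^*\langle T^{\wedge(\text{stuff})}\dot\wedge T\rangle$ is dominated by a density current. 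The crucial bookkeeping is that the cohomological loss $\alpha^m-\{\langle T^m\rangle\}$ restricted over $V$ dominates, via Lemma \ref{le-truonghom=1lelong} applied $p$ times (peeling off one factor of $T$ at a time, each contributing a factor $\nu(T,V)$), a quantity $\gtrsim \nu(T,V)^p\vol(V)$. The maximality of $V$ guarantees that distinct $V,V'\in\cali{V}$ are not nested, so the currents $\nu(T,V)^p[V]$ have disjoint supports generically and the contributions add: this is exactly where non-maximal sets must be excluded, as the remark after Theorem \ref{the-self-intersec} explains.

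Concretely, the main inequality I would assemble is: the closed positive current $R$ representing $\alpha^m-\{\langle T^m\rangle\}$ (pseudoeffectivity from \cite[Theorem 1.1]{Viet-generalized-nonpluri}) satisfies, generically along each $V\in\cali{V}$ of dimension $l$, the lower bound on generic Lelong numbers $\nu(R,V)\ge c\,\nu(T,V)^{n-l}$ with $c=c(\cali{K}_0)>0$; summing $\nu(R,V)\,[V]\le R$ over the pairwise non-nested family $\cali{V}$ and pairing with $\omega^{\,l}$ gives \eqref{ine-obstructionselfinterkahler}. The bound $\nu(R,V)\ge c\,\nu(T,V)^{n-l}$ is itself proved by slicing: restrict to a generic transversal $(p)$-dimensional disc $D$ meeting $V$, where the situation becomes the local one of intersecting $p$ currents each with Lelong number $\ge\nu(T,V)$ at the central point, and Demailly's lower bound (Corollary \ref{cor-sosanhlelong}, in its local incarnation \cite[Page 169]{Demailly_analyticmethod}) gives $\nu(\langle T^p\rangle, D\cap V)\ge\nu(T,V)^p$; then the uniform Alexandrov--Fenchel passage from $p$ to $m$ via Corollary \ref{cor-theoremmain} upgrades this to the statement for $R$.

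\textbf{Main obstacle.} The hardest part will be making the slicing/tangent-current localization along a \emph{singular} analytic set $V$ rigorous and showing the multiplicities genuinely multiply rather than just being bounded below by one of them — i.e. establishing $\nu(\langle T^p\rangle,V)\ge\nu(T,V)^p$ with the correct exponent $p=n-\dim V$, uniformly, and transferring it through the reversed Alexandrov--Fenchel inequality without losing the polynomial (not merely exponential-in-a-bad-way) dependence; the exponent $2^m$ on the right of \eqref{ine-obstructionselfinter} is the visible scar of iterating Corollary \ref{cor-theoremmain}. A secondary technical point is handling the case $\dim V=n-m$ versus $\dim V>n-m$ uniformly, and ensuring the constant $C$ depends only on $\cali{K}_0$ (and $\omega,X$) and not on the geometry of the individual $V$'s — which is the whole reason for working with density currents rather than blowups as in \cite{Vu_lelong-bigclass,Viet-generalized-nonpluri}.
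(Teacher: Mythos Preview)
Your proposal misidentifies both the key mechanism and the role of the reversed Alexandrov--Fenchel inequality, and contains a genuine error about non-pluripolar products.

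\textbf{First, a false claim.} You write that ``$\langle T^p\rangle$ carries mass at least $\nu(T,V)^p\,[V]$''. This is impossible: non-pluripolar products by construction put \emph{no} mass on pluripolar sets, and $V\subset I_T$ since $\nu(T,V)>0$. The relevant statement is one degree lower: $S:=\langle T^{n-l-1}\rangle$ has \emph{Lelong number} $\nu(S,V)\ge \nu(T,V)^{n-l-1}$ along $V$, and then Lemma \ref{le-truonghom=1lelong} applied to $P=T$ and this $S$ converts that into the mass-defect bound $\|\{T\}\wedge\{S\}-\{\langle T\dot\wedge S\rangle\}\|\ge \sum_{V}\nu(T,V)^{n-l}\vol(V)$. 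Getting $\nu(S,V)\ge\nu(T,V)^{n-l-1}$ is where the actual work is, and your tangent-current-along-singular-$V$ / transversal-disc slicing plan is not how the paper does it. The paper first reduces to $T$ with analytic singularities via Demailly's approximation; then, by \emph{maximality} of $V$, near a generic smooth point of $V$ the polar set $I_T$ coincides with $V$, so the classical product $T^{n-l-1}$ is well-defined there (codimension $n-l$, product degree $n-l-1<n-l$) and agrees with $\langle T^{n-l-1}\rangle$. Demailly's classical Lelong number comparison then gives the bound. You never invoke the analytic-singularities reduction, and without it your slicing argument has no foothold.

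\textbf{Second, the reversed AF inequality is not used here at all.} Corollary \ref{cor-theoremmain} does not enter the proof of Theorem \ref{the-kahlerself-intersec}; it is used only later (via Theorem \ref{the-congthemomega}) to pass from the K\"ahler case to the big nef case, and \emph{that} is where the exponent $2^m$ in \eqref{ine-obstructionselfinter} appears. In the K\"ahler setting the reduction from $m$ to $n-l$ is a one-line monotonicity step: $\{\langle T^m\rangle\}\le \{\langle T^{n-l}\rangle\}\wedge\alpha^{l+m-n}$, hence $\alpha^m-\{\langle T^m\rangle\}\ge (\alpha^{n-l}-\{\langle T^{n-l}\rangle\})\wedge\alpha^{l+m-n}$, and compactness of $\cali{K}_0$ in the K\"ahler cone gives $\alpha\ge c_0\omega$, so wedging with $\alpha^{l+m-n}\wedge\omega^{n-m}$ is comparable to wedging with $\omega^l$. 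There is no exponent loss. Your ``contrapositive'' use of Corollary \ref{cor-theoremmain} would instead yield $\|\alpha^m-\{\langle T^m\rangle\}\|\gtrsim \|\alpha^{n-l}-\{\langle T^{n-l}\rangle\}\|^{2^{m-(n-l)}}$, which is strictly worse and not what the theorem claims.
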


By the proof below, one can see that if the sum in the right-hand of  (\ref{ine-obstructionselfinterkahler}) is taken only on $V \in \cali{V}$ such that the dimension of $V$ is equal to $n-m$, then the factor $C$ can be replaced by  $1$.

\proof By Demailly's analytic approximation of psh functions (see \cite{Demailly_analyticmethod}), it is enough to check (\ref{ine-obstructionselfinterkahler}) when $T$ has analytic singularities (note that the maximality of $V$ is preserved here). Hence, the polar locus $I_T$ of $T$ is an analytic subset on $X$. 

Let $0 \le l \le n-1$ be an integer. Let $\cali{V}_l$ be the subset of $\cali{V}$ consisting of $V$ of dimension $l$. 
Let $V \in \cali{V}_l$. By the definition of $\cali{V}$, we have $l \ge n-m$. Put $S:= \langle T^{n-l-1}\rangle$. Note that $\langle T^{n-l} \rangle = \langle T \dot{\wedge} S \rangle$ by Proposition \ref{pro-sublinearnonpluripolar}.  Using this and  monotonicity of non-pluripolar products, we get 
\begin{align}\label{ine-chuyenveVdimllossmass}
\big\| \alpha^m - \{\langle T^m \rangle \}  \big\| \ge \big\| \alpha^m - \{\langle T^{n-l} \rangle \wedge \alpha^{l+m-n}\}\big\| \gtrsim  \big\| \alpha^{n-l} - \{\langle T \dot{\wedge}S \rangle\}\big\|.
\end{align}

Let $1 \le s \le n-l-1$ be an integer. Since $V$ is of dimension $l$ and $V$ is a maximal irreducible analytic subset of $X$ such that $\nu(T,V)>0$, we see that the self-intersection $T^{s}$ is classically well-defined in an open neighborhood $U$ of $V \backslash \Sing(I_T)$ in $X$, where $\Sing (I_T)$ is the singular part of the analytic set $I_T$. Moreover, $T^s$ has no mass on $V$ because $s < n-l$. By \cite[Page 169]{Demailly_analyticmethod} or Corollary \ref{cor-sosanhlelong}, we have 
\begin{align}\label{ine-TswelldefinedU}
\nu(T^s, V\cap U) \ge  \nu(T,V)^s.
\end{align}
Now using the fact that $T$ has analytic singularities and the maximality of $V$, we get 
$$\langle T^s \rangle =\bold{1}_{U \backslash V} T^s = T^s$$
on $U$ (we choose $U$ such that $U$ doesn't intersect the singularity of $I_T$). Using these observations for $s=n-l-1$ gives
\begin{align}\label{ine-TswelldefinedU2}
\nu(S, V) \ge  \nu(T,V)^{n-l-1}.
\end{align}
Now recall that non-pluripolar products have no mass on pluripolar sets. In particular, $S$ has no mass on $I_T$.
This allows us to apply Lemma \ref{le-truonghom=1lelong} to $T$ and $S$. As a result, we obtain 
$$\big \| \alpha \wedge \{S\} - \{\langle T \dot{\wedge}S \rangle \}\big\| \ge \sum_{V \in \cali{V}_l} \nu(S,V) \nu(T,V) \vol(V) \ge \sum_{V \in \cali{V}_l} \nu(T,V)^{n-l} \vol(V)$$
by (\ref{ine-TswelldefinedU2}). On the other hand, by monotonicity of non-pluripolar products again, we get $\{S\} \le \alpha^{n-l-1}$. This combined with the fact that $\alpha$ is K\"ahler implies $\alpha \wedge \{S\} \le \alpha^{n-l}$. Hence, we deduce that 
$$\big \| \alpha^{n-l} - \{\langle T \dot{\wedge}S \rangle \}\big\| \ge \sum_{V \in \cali{V}_l} \nu(T,V)^{n-l} \vol(V).$$
This coupled with (\ref{ine-chuyenveVdimllossmass}) gives (\ref{ine-obstructionselfinterkahler}). The proof is finished.
\endproof

We now prove Theorem \ref{the-self-intersec}. Let the notations be as in the statement of that result. 

\begin{proof}[End of the proof of Theorem \ref{the-self-intersec}]   Let $T_{\min}$ be a current with minimal singularities in $\alpha$. Let $\theta$ be a smooth  closed $(1,1)$-form such that $\beta:= \alpha+ \{\theta\}$ is K\"ahler. We can choose $\theta$ to depend only on $\|\alpha\|$.   Applying Theorem \ref{the-congthemomega} to $T, T_{\min}, \theta$ gives 
$$ \big \| \{\langle (T_{\min}+\theta)^m \rangle\} -\{\langle (T+\theta)^m \rangle\} \big\| \le  C\big\|  \alpha^m- \{\langle T^m \rangle\}\big\|^{2^{-m}},$$
for some constant $C$ depending only on $\cali{N}_0$. Using this and  the fact that $\alpha$ is big nef, we infer
$$ \big \|  \beta^m  -\{\langle (T+\theta)^m \rangle\} \big\| \le  C\big\|  \alpha^m- \{\langle T^m \rangle\}\big\|^{2^{-m}}.$$
Now applying Theorem \ref{the-kahlerself-intersec} to $\beta, T+\theta$ shows that the left-hand side of the last inequality is greater than or equal to 
$$\frac{1}{m}\sum_{V \in \cali{V}}\big(\nu(T,V)\big)^{n- \dim V} \, \vol(V).$$
Hence (\ref{ine-obstructionselfinter}) follows. The proof is complete.
\end{proof}

\bibliography{biblio_family_MA,biblio_Viet_papers}
\bibliographystyle{siam}

\bigskip

\noindent
\Addresses
\end{document}